\newtheorem{lemma}{Lemma}
\newtheorem{theorem}{Theorem}
\newtheorem{remark}{Remark}
\renewcommand{\b}[1]{{\boldsymbol{#1}}}
\providecommand{\NDOne}[1]{\mathbb{ND}^{I}_{#1}}
\newcommand{\NED}{N\'ed\'elec }
\def\restrict#1{\raise-0.2ex\hbox{\ensuremath|}_{#1}}
\newcommand{\VT}{\mathcal{V}}
\newcommand{\EG}{\mathcal{E}}
\newcommand{\FC}{\mathcal{F}}
\newcommand{\vt}{\mathsf{v}}
\newcommand{\eg}{\mathsf{e}}
\newcommand{\egI}{\mathsf{E}}
\newcommand{\fc}{\mathsf{f}}
\newcommand{\fcI}{\mathsf{F}}
\newcommand{\fcs}{{\mathsf{f}_s}}
\newcommand{\fct}{{\mathsf{f}_t}}
\newcommand{\bR}{\mathbb R}
\newcommand{\bld}[1]{\boldsymbol{#1}}
\newcommand{\curls}{{{\nabla\times}}}
\newcommand{\divs}{{\nabla\cdot}}
\newcommand{\grads}{{\nabla}}
\newtheorem{definition}[theorem]{Definition}
\newcommand{\pol}{\mathbb{P}}
\newcommand{\qol}{\mathbb{Q}}
\begin{document}
\title[Pyramidal sequence]{A lowest-order composite finite element exact sequence on pyramids}
\author{Mark Ainsworth}
\address{Division of Applied Mathematics, Brown University, 182 George St,
Providence RI 02912, USA.}
\email{Mark\_Ainsworth@brown.edu}
\thanks{First author gratefully acknowledges the partial support of this work
under AFOSR contract FA9550-12-1-0399.}

\author{Guosheng Fu}
\address{Division of Applied Mathematics, Brown University, 182 George St,
Providence RI 02912, USA.}
\email{Guosheng\_Fu@brown.edu}

\keywords{}
\subjclass{65N30. 65Y20. 65D17. 68U07}

\begin{abstract} 

Composite basis functions for pyramidal elements on the spaces $H^1(\Omega)$,
$H(\mathrm{curl},\Omega)$, $H(\mathrm{div},\Omega)$ and $L^2(\Omega)$ are
presented. In particular, we construct the lowest-order composite pyramidal
elements and show that they respect the de Rham diagram, i.e. we have an exact
sequence and satisfy the commuting property. Moreover, the finite elements are
fully compatible with the standard finite elements for the lowest-order
Raviart-Thomas-\NED sequence on tetrahedral and hexahedral elements. That is to
say, the new elements have the same degrees of freedom on the shared interface
with the neighbouring hexahedral or tetrahedra elements, and the basis
functions are conforming in the sense that they maintain the required level of
continuity (full, tangential component, normal component, ...) across the
interface. Furthermore, we study the approximation properties of the spaces as
an initial partition consisting of tetrahedra, hexahedra and pyramid elements
is successively subdivided and show that the spaces result in the same
(optimal) order of approximation in terms of the mesh size $h$ as one would
obtain using purely hexahedral or purely tetrahedral partitions. 
\end{abstract} \maketitle

\section{Introduction}

A key issue in performing finite element analysis in complex three dimensional
applications 
is the meshing of the domain $\Omega$. Given a choice, many practitioners would
opt for a mesh consisting entirely of brick, or hexahedral elements.
Unfortunately, meshing complicated domains using only hexahedral elements is
far from straightforward. By way of contrast, current mesh generators based on
tetrahedral elements are readily available and are routinely used to mesh even
quite complicated domains in three dimensions. 

The relative efficiency with which hexahedral elements can be used to fill
space compared with tetrahedral elements has led to an increasing
use~\cite{Baudouin2014,OwenSaigal01}  of \emph{tetrahedral-hexadral-pyramidal}
(THP) partitions in which hexahedral elements are used on part of the domain
whereas tetrahedral elements are used in sub-domains where the use of
hexahedral elements would pose difficulties. Pyramidal elements are used to
interface between hexahedral and tetrahedral elements.

At first glance, THP partitions would seem to provide the best of both
worlds---at least as far as the issue of meshing goes. Difficulties arise when
one turns to the question of choosing the basis functions for the pyramidal
elements. Here, we have in mind applications including: primal formulations of
elasticity, requiring elements in the space $H^1(\Omega)$ for which full
interelement continuity is needed; electromagnetics, requiring vector-valued
elements in the space $H(\mathrm{curl},\Omega)$ for which only the continuity
of tangential components is needed; mixed formulations of porous media flow,
using vector-valued elements in the space $H(\mathrm{div},\Omega)$ for which
only continuity of normal components is needed, along with scalar valued
elements in the space $L^2(\Omega)$. 

The choice of degrees of freedom and the types of basis function that can be
used in the pyramidal elements is constrained by the requirement for the
resulting elements to be compatible with neighbouring hexahedral and
tetrahedral finite elements. Indeed, the need for compatibility between
hexahedra-pyramids and between tetrahedra-pyramids dictates the choice of the
degree of freedom on the pyramid.  The problems start when one seeks to select
a basis for the pyramidal elements. 

Firstly, the basis functions for the hexadral and tetrahedral elements are
polynomial on the interfaces. This dictates that the basis functions on the
faces of the pyramids must be polynomials of exactly the same type as those on
the neighbouring element. This would be fine were it not for the fact that it
is \emph{impossible} to achieve this using polynomial basis functions on the
pyramidal elements. While it is quite possible to proceed by using
\emph{rational} basis functions on the pyramids
\cite{Bedrosian92,GradinaruHiptmair99,Sherwin97,SherwinWarburtonKarniadakis98,Warburton00,NigamPhillips12,NigamPhillips12b,BergotCohenDurufle10,BergotDurufle13a,BergotDurufle13b,BergotDurufle13c,FuentesKeithDemkowiczNagaraj15,ChanWarburton15,ChanWarburton16,Gillette16,CockburnFuCommuting},
this path gives rise to a new set of issues relating to computations involving
the rational functions and their approximation properties. 

Secondly, the pyramidal basis functions used to discretise the spaces
$H^1(\Omega)$, $H(\mathrm{curl},\Omega)$, $H(\mathrm{div},\Omega)$ and
$L^2(\Omega)$ should not be chosen in isolation. For instance, if finite
element discretisation of mixed formulations are to be stable and consistent
then it is necessary for the finite element spaces to be \emph{exact} and for
the associated de Rham diagram to \emph{commute}. These terms will be defined
more precisely later but for now it suffices to realise that there are
additional constraints beyond that of simply maintaining compatibility with
hexahedral and tetrahedral elements. 

An alternative approach avoiding the need for rational basis functions, and one
that is perhaps rather more akin to the spirit of the finite element method
itself, consists of using a \emph{composite} or \emph{macro} element basis for
the pyramidal elements. In other words, the basis functions on the individual
pyramids are, by analogy with the finite element method, constructed using
\emph{piecewise polynomials} on a subdivision of the pyramid. A natural choice
consists of subdividing the pyramid into a pair of tetrahedra
\cite{Wieners97,KnabnerSumm01,BluckWalker08,LiuDaviesYuanKvrivzek04,LiuDaviesKrivzekGuan11,AinsworthDavydovSchumaker16}. Of course, the use of composite 
element poses its own challenges.

In the present work we explore the use of composite basis functions for
pyramidal elements on the spaces $H^1(\Omega)$, $H(\mathrm{curl},\Omega)$,
$H(\mathrm{div},\Omega)$ and $L^2(\Omega)$. In particular, we construct the
lowest-order composite pyramidal elements and show that they respect the de
Rham diagram, i.e. we have an exact sequence and satisfy the commuting
property. Moreover, the finite elements are fully compatible with the standard
finite elements for the lowest-order {Raviart-Thomas-\NED}sequence
\cite{Nedelec80,ArnoldLogg14} on tetrahedral and hexahedral elements. That is
to say, the new elements have the same degrees of freedom on the shared
interface with the neighbouring hexahedral or tetrahedra elements, and the
basis functions are conforming in the sense that they maintain the required
level of continuity (full, tangential component, normal component, ...) across
the interface. Furthermore, we study the approximation properties of the spaces
as an initial THP partition is successively subdivided and show that the spaces
result in the same (optimal) order of approximation in terms of the mesh size
$h$ as one would obtain using purely hexahedral or purely tetrahedral
partitions. In short, all of the properties one would wish to have of
lowest-order pyramidal elements are shown to hold along with the added bonus
that composite elements are readily handled using standard finite element
technology. 

The existing literature \emph{loc. cit.} on composite elements is limited to
$H^1$-conforming finite elements---our finite element and basis functions
coincide with that proposed in \cite{Wieners97} for the $H^1(\Omega)$ case.
However, many problems of practical interest are naturally posed over a mixture
of the spaces $H(\mathrm{curl})$, $H(\mathrm{div})$ or $L^2$ in addition to
$H^1(\Omega)$.  As such the absence of lowest-order composite
$H(\mathrm{curl})$-, our $H(\mathrm{div})$-, and $L^2$-conforming finite
elements constitutes a severe limitation for the use of finite element
approximation on THP partitions. The present work fills this gap in the
literature. 

The remainder of this article is organized as follows. Section 2 gives a brief
overview of the standard lowest-order spaces on hexahedral and tetrahedral
elements with which we are seeking to maintain compatibility, and contains
preliminaries on the pyramidal geometry and polynomial spaces in two
dimensions.  In Section 3, we introduce the lowest-order composite finite
elements on a pyramid.  In Section 4, we present the lowest-order global finite
elements on tetrahedral-hexahedral-pyramidal ({\sf THP}) partitions and prove
their approximation properties on a sequence of uniformly refined (non-affine)
{\sf THP} partitions.

\section{Review of Lowest Order Hexahedral and Tetrahedral Elements}
\label{sec:notation} In this section, we introduce various notations and take
the opportunity to briefly review the lowest order finite elements on
hexahedral and tetrahedra with which we require the new pyramidal elements to
be compatible.  \subsection{Lowest-order finite elements on a tetrahedron and a
hexahedron} Let us first recall the lowest order finite elements on the
reference tetrahedron
\begin{subequations}
\label{reference}
\begin{align}
\label{tet}
 \widehat T = \{(x,y,z): 0<x,y,z<1, x+y+z <1\},
\end{align}
and the reference hexahedron 
\begin{align}
\label{cube}
\widehat H = \{(x,y,z): 0<x,y,z<1\}.
\end{align}
\end{subequations}

The lowest-order $H^1$-conforming finite elements and the associated (vertex-based) basis 
functions on the reference tetrahedron and reference hexahedron are given in Table \ref{table-H1}.
Here $\pol_k(\widehat T)$ stands for the space of polynomials of total degree no greater than $k$ on the 
reference tetrahedron, and
$\qol_k(\widehat H)$ stands for the space of tensor-product polynomials of degree no greater than $k$ in each argument on the reference hexahedron. 
\begin{table}[!ht]
\begin{tabular}{|c| c| c| c c|}
\hline
element $\widehat K$     & space $S(\widehat K)$ & $\#$ & \multicolumn{2}{|c|}{basis}\\
 \hline
\multirow{ 2}{*}{$\widehat T$} & \multirow{ 2}{*}{$\pol_1(\widehat T)$} & 
\multirow{2}{*}4 & \multicolumn{2}{|c|}{$\phi_1 = 1-x-y-z$, }\\
 & &  & \multicolumn{2}{|c|}{ $\phi_2 = x $, \;\; $\phi_3 = y,\;\; \phi_4 = z,$.}\\
 \hline
\multirow{ 4}{*}{$\widehat H$} & \multirow{ 4}{*}{$\qol_1(\widehat H)$} & 
\multirow{4}{*}8 & 
 \multicolumn{2}{|c|}{$\phi_1 = (1-x)(1-y)(1-z), $  $\phi_2 = x(1-y)(1-z),$}\\
 & &  & 
 \multicolumn{2}{|c|}{$\phi_3 = xy(1-z), $  $\phi_4 = (1-x)y(1-z)$,}\\ 
 & &  & 
  \multicolumn{2}{|c|}{$\phi_5 = (1-x)(1-y)z, $  $\phi_6 = x(1-y)z$,}\\ 
 & &  & 
  \multicolumn{2}{|c|}{$\phi_7 = xyz, $  $\phi_8 = (1-x)yz$.}\\ 
 \hline
\end{tabular}
\caption{Lowest order $H^1$-conforming finite elements on the reference tetrahedron and 
reference hexahedron.}
 \label{table-H1}
\end{table}

The lowest-order $H(\mathrm{curl})$-conforming finite elements and the associated (edge-based) basis 
functions on the reference tetrahedron and reference hexahedron are given in Table \ref{table-Hcurl}.
Here 
the lowest-order \NED spaces of the first kind are given as follows:
\begin{align*}
\mathbb{ND}_0(\widehat T) :=&\;\pol_0(\widehat T)^3\oplus \bld x\times \pol_0(\widehat T)^3,\\
 \mathbb{ND}_0(\widehat H) :=&\;
\left(\pol_{0,1,1}(\widehat H),\pol_{1,0,1}(\widehat H),\pol_{1,1,0}(\widehat H)\right)^{\sf{T}},
\end{align*}
where $\pol_{\ell,m,n}(\widehat H)$ stands for the space of tensor-product polynomials of 
degree no greater than $\ell$ in the first argument, 
$m$ in the second argument, and $n$ in the third argument on the reference hexahedron. 
\begin{table}[!ht]
\resizebox{0.9\columnwidth}{!}{%
\begin{tabular}{|c| c| c| c c|}
\hline
element $\widehat K$     & space $\bld E(\widehat K)$ & $\#$ & \multicolumn{2}{|c|}{basis}\\
 \hline
\multirow{ 3}{*}{$\widehat T$} & \multirow{ 3}{*}{
$\mathbb{ND}_0(\widehat T)$
} & 
\multirow{3}{*}6 & 
\multicolumn{2}{|c|}{$\bld\varphi_1 = x\grads y-y\grads x $, $\bld\varphi_2 = z\grads x+z\grads y+(1-x-y)\grads z$,}
\\
 & &  & 
 \multicolumn{2}{|c|}{$\bld\varphi_3 = y\grads z-z\grads y $, 
 $\bld\varphi_4 = (1-y-z)\grads x+x\grads y+x\grads z,$}\\
 & &  & 
 \multicolumn{2}{|c|}{$\bld\varphi_5 = z\grads x-x\grads z $, 
 $\bld\varphi_6 = y\grads x+(1-x-z)\grads y+y\grads z.$}\\
\hline
\multirow{ 4}{*}{$\widehat H$} & \multirow{ 4}{*}{$\mathbb{ND}_0(\widehat H)$} & 
\multirow{4}{*}{12} & 
 \multicolumn{2}{|c|}{$\bld\varphi_1 = (1-y)(1-z)\grads x,\bld\varphi_2 = (1-y)z\grads x,
\bld\varphi_3= y(1-z)\grads x
 $,}\\
 & &  & 
 \multicolumn{2}{|c|}{$\bld\varphi_4 = (1-z)(1-x)\grads y,\bld\varphi_5 = (1-z)x\grads y,
\bld\varphi_6= z(1-x)\grads y
 $,}\\
 & &  & 
 \multicolumn{2}{|c|}{$\bld\varphi_7 = (1-x)(1-y)\grads z,\bld\varphi_8 = (1-x)y\grads z,
\bld\varphi_9= x(1-y)\grads z
 $,}\\
 & &  & 
 \multicolumn{2}{|c|}{$\bld\varphi_{10} = yz\grads x,\bld\varphi_{11} = zx\grads y,
\bld\varphi_{12}= xy\grads z
 $.}\\
 \hline
\end{tabular}
}

\caption{Lowest order $H(\mathrm{curl})$-conforming finite elements on the reference tetrahedron and 
reference hexahedron.
}
 \label{table-Hcurl}
\end{table}

The lowest-order $H(\mathrm{div})$-conforming finite elements and the associated (face-based) basis 
functions on the reference tetrahedron and reference hexahedron are given in Table \ref{table-Hdiv}.
Here 
the lowest-order Raviart-Thomas spaces are given as follows:
\begin{align*}
\mathbb{RT}_0(\widehat T) :=&\;\pol_0(\widehat T)^3\oplus \bld x\, \pol_0(\widehat T),\\
 \mathbb{RT}_0(\widehat H) :=&\;
\left(\pol_{1,0,0}(\widehat H),\pol_{0,1,0}(\widehat H),\pol_{0,0,1}(\widehat H)\right)^{\sf{T}}.
\end{align*}
\begin{table}[!ht]
\begin{tabular}{|c| c| c| c c|}
\hline
element $\widehat K$     & space $\bld V(\widehat K)$  & $\#$ & \multicolumn{2}{|c|}{basis}\\
 \hline
\multirow{ 4}{*}{$\widehat T$} & \multirow{ 4}{*}{
$\mathbb{RT}_0(\widehat T)$
} & 
\multirow{4}{*}4 & 
\multicolumn{2}{|c|}{$\bld\psi_1 = 2(x\grads x+y\grads y+z\grads z)$,}\\
& & & \multicolumn{2}{|c|}{
$\bld\psi_2 = 2((x-1)\grads x+y\grads y+z\grads z)$,}
\\
 & &  & 
 \multicolumn{2}{|c|}{$\bld\psi_3 = 2(x\grads x+(y-1)\grads y+z\grads z)$,}\\
&&&  \multicolumn{2}{|c|}{ 
$\bld\psi_4 = 2(x\grads x+y\grads y+(z-1)\grads z)$.}\\
\hline
\multirow{3}{*}{$\widehat H$} & \multirow{3}{*}{$\mathbb{RT}_0(\widehat H)$} & 
\multirow{3}{*}{6} & 
 \multicolumn{2}{|c|}{$\bld\psi_1 = (x-1)\grads x,\;\;\;\bld\psi_2 = x\grads x,$}\\
 &&&
 \multicolumn{2}{|c|}{$\bld\psi_3= (y-1)\grads y,\;\;\; \bld\psi_4= y\grads y$,} \\
 & &  & 
 \multicolumn{2}{|c|}{$\bld\psi_5= (z-1)\grads z,\;\;\; \bld\psi_6= z\grads z$.} \\
 \hline
\end{tabular}
\caption{Lowest order $H(\mathrm{div})$-conforming finite elements on the reference tetrahedron and 
reference hexahedron.
}
 \label{table-Hdiv}
\end{table}

The lowest-order $L^2$-conforming finite elements and the associated (cell-based) basis 
functions on the reference tetrahedron and reference hexahedron are given in Table \ref{table-L2}.
\begin{table}[!ht]
\begin{tabular}{|c| c| c| c c|}
\hline
element $\widehat K$     & space $W(\widehat K)$  & $\#$ & \multicolumn{2}{|c|}{basis}\\
 \hline
\multirow{ 1}{*}{$\widehat T$} & \multirow{1}{*}{
$\pol_0(\widehat T)$
} & 
\multirow{1}{*}1 & 
\multicolumn{2}{|c|}{$\psi_1 = 6$}\\
\hline
\multirow{1}{*}{$\widehat H$} & \multirow{1}{*}{$\qol_0(\widehat H)$} & 
\multirow{1}{*}{1} & 
 \multicolumn{2}{|c|}{$\psi_1 = 1$}\\
 \hline
\end{tabular}
\caption{Lowest order $L^2$-conforming finite elements on the reference tetrahedron and 
reference hexahedron.
}
 \label{table-L2}
\end{table}

Let $\Phi_K:\widehat K\rightarrow K$ be a continuously differentiable, invertible and surjective map. 
If $\widehat{\bld {x}}$ denotes a coordinate system on the reference element $\widehat K$, then 
$\bld x = \Phi_K(\widehat{\bld {x}})$ is the corresponding coordinate system on the physical element $K$.
The
Jacobian matrix of the mapping $\Phi_K$ with respect to the reference coordinates and its determinant 
are denoted by
\[
 F_K(\widehat{\b x}) = \left(\frac{\partial\Phi_{K,i}}{\partial \hat x_j}(\widehat {\b x})\right)_{1\le i,j\le 3},\quad
 J_K(\widehat{\b x}) = \det (F_K(\widehat{\b x})).
\]
Finite elements on a {\it physical} element $K$ are defined in terms of  a reference element $\widehat K$ 
via mapping in the usual way \cite{Monk03}:
\begin{subequations}
\label{space-mapping}
\begin{alignat}{4}
\label{space-mapping-1}
 S(K) :=&\; \{ v\in H^1(K):\;
&& \;\; v = \widehat v\circ \Phi_K^{-1}, \quad &&\forall \;\widehat v\in  S(\widehat{K})
 \}, \\
\label{space-mapping-2}
 \b E(K) :=&\; \{\b v\in H(\mathrm{curl},K):\;
&&\;\; \b  v = F_K^{-T}\widehat {\b v}\circ \Phi_K^{-1}, \quad &&\forall\; \widehat{\b v}\in  \b E(\widehat{K})
 \}, \\
\label{space-mapping-3}
\b V(K) :=&\; \{\b v\in H(\mathrm{div}, K):\;
 &&\;\;\b v = J_K^{-1}F_K \widehat {\b v}\circ \Phi_K^{-1}, \quad&&\forall\;  \widehat{\b v}\in \b V(\widehat{K})
 \}, \\
\label{space-mapping-4}
W(K) :=&\; \{ v\in L^2(K):\;
 &&\;\; v = J_K^{-1} \widehat v\circ \Phi_K^{-1}, \quad&&\forall\;  \widehat v\in  W(\widehat{K})
 \},
\end{alignat}
\end{subequations}
where $\widehat K$ is either the reference tetrahedron $\widehat T$, or the 
reference hexahedron $\widehat H$, and
the reference finite elements $S(\widehat{K})$, 
$\b E(\widehat{K})$, $\b V(\widehat{K})$, and $W(\widehat{K})$
are given in Table \ref{table-H1}--\ref{table-L2}.
In particular, 
a basis for the physical element is obtained by applying the appropriate mapping to the basis defined on the reference element.

\subsection{Regular pyramid}
Let us now introduce notation on the pyramidal element that will be used later on. 
Let  $K\subset \bR^3$ be a {\it regular pyramid}
consisting of five vertices $\vt_1,\cdots,\vt_5$ ordered as shown in Figure \ref{fig-pyramid}, 
where $\vt_1,\vt_2, \vt_3,\vt_4$ form a parallelogram.
The pyramid is cut into two tetrahedra, $T_1$ and $T_2$,  by forming a new triangular face using 
 the vertices $\vt_1$, $\vt_3$, and $\vt_5$ as shown in Figure \ref{fig-pyramid}. 
The local vertex ordering of the two tetrahedra is also depicted in Figure \ref{fig-pyramid}.
\begin{figure}[!ht]
\centerline{
\psfig{file=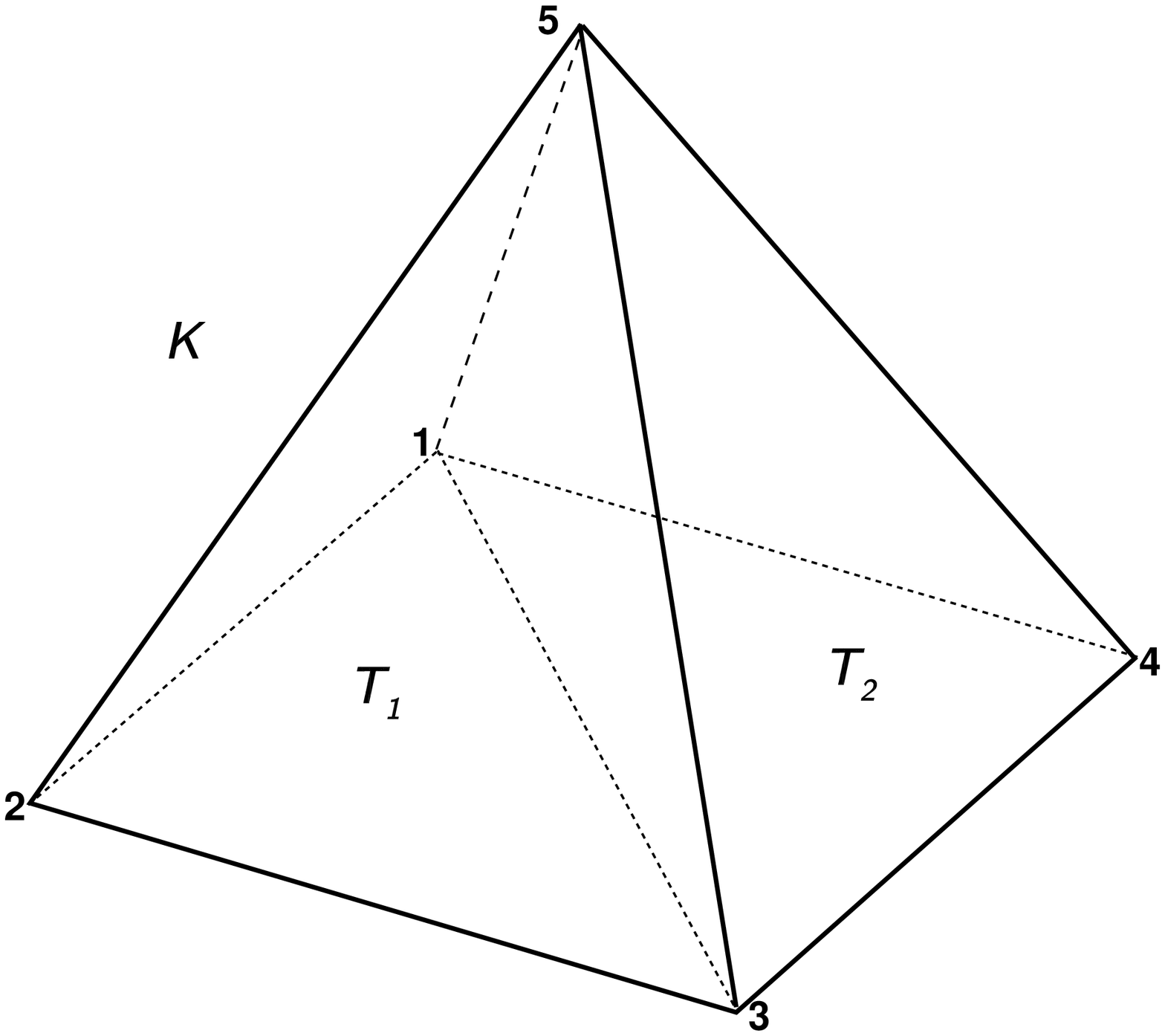,width=2.5in}
\psfig{file=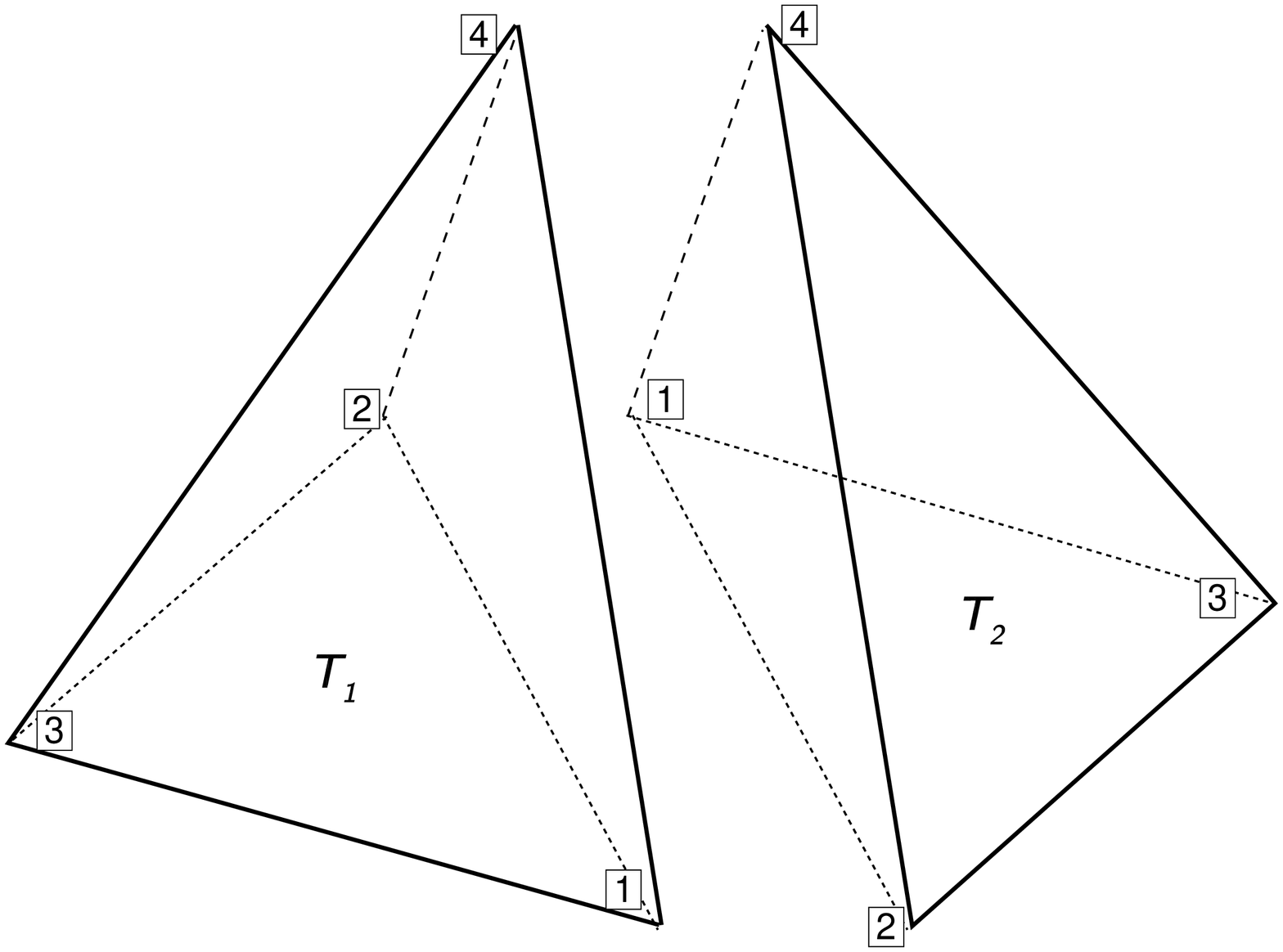,width=2.5in}}
\caption{Left: a regular pyramid cut into two tetrahedra. 
Right: local vertex ordering of each tetrahedra.
} \label{fig-pyramid}
\end{figure}

The pyramid consists of five vertices \[\VT(K) = \{\vt_1,\vt_2,\vt_3,\vt_4,\vt_5\},\] 
eight  edges 
 \[\EG(K)=\{\eg_{12}, \;\eg_{14}, \;\eg_{15},\;\eg_{23},\;\eg_{25},\;\eg_{34},\;\eg_{35},\;\eg_{45}
 \},\]
where $\eg_{ij}=[\vt_i,\vt_j]$ is the edge connecting vertices $\vt_i$ and $\vt_j$, 
and five faces 
\[\FC(K)=\{\fc_{125},\;\fc_{145},\;\fc_{235},\;\fc_{345},\;\fc_{1234}
\},
\]
where $\fc_{ijk} = [\vt_i,\vt_j,\vt_k]$ is a triangular face, and 
$\fc_{1234} = [\vt_1, \vt_2,\vt_3,\vt_4]$ is the base parallelogram face.
Moreover, let
$
\FC_t(K)\subset  \FC(K)
$
denote the set of triangular faces of $K$, and 
$
\FC_s(K)\subset  \FC(K)
$
denote the set containing the base parallelogram face of $K$.

\subsection{Polynomial spaces in two dimensions}
Given an element $T\subset \bR^2$, we us
$\pol_p(T)$ to denote the space of polynomials 
with total degree at most $p$ defined on $T$.
When $T\subset \bR^2$ is the unit square, we use 
$\pol_{p_1,p_2}(T)$ to denote the space of tensor-product polynomials with degree at most 
$p_1$ in the $x$-coordinate, and $p_2$ in the $y$-coordinate. 
We also denote $\qol_{p}(T) : = \pol_{p,p}(T)$.

The lowest-order \NED spaces of the first kind (rotated Raviart-Thomas) 
on the reference triangle $\hat\fc_t$ and reference square $\hat\fc_s$ in $\bR^2$
with Cartesian coordinates $(x,y)$ are given as follows:
\begin{align*}
 \NDOne{0}(\hat\fc_t) : = &\; \pol_0(\hat\fc_t)^2 \oplus \bld x \times \pol_0(\hat\fc_t),
 \\
 \NDOne{0}(\hat\fc_s) : = &\;
 [\pol_{0,1}(\hat\fc_s),\; \pol_{1,0}(\hat\fc_s)]^T.
\end{align*}
Here $\bld x\times v$ denotes the vector $[y\, v, -x\, v]^T$ where $v$ is  a scalar-valued function.
The lowest-order \NED space of the first kind on a physical  triangle or quadrilateral in $\bR^3$ is defined 
in terms of the corresponding space on the reference element via the standard covariant Piola mapping:
\begin{subequations}
\label{NED2D}
\begin{align}
\label{NED2D-1}
 \NDOne{0}(\fct) : = &\; 
 \{(J_t^\dagger)^{T} \bld\phi\,\circ F_t^{-1}:\;\;\;
 \bld\phi \in \NDOne{0}(\hat\fc_t)
 \},
 \\
 \NDOne{0}(\fcs) : = &\; 
 \{(J_s^\dagger)^{T} \bld\phi\,\circ F_s^{-1}:\;\;\;
 \bld\phi \in \NDOne{0}(\hat\fc_s),
 \}
\end{align}
\end{subequations}
where $F_t$ is the (affine) mapping from the reference triangle $\hat\fc_t$ in $\bR^2$ to the physical triangle $\fc_t$ in $\bR^3$,
$J_t\in \bR^{3\times 2}$ is the Jacobian matrix, and 
\[
 J_t^{\dagger} = ( J_t^T\,J_t)^{-1} J_t^T
\]
is the pseudo-inverse. Similarly, 
$F_s$ is the bilinear mapping from $\hat\fc_s$ to $\fc_s$,
and $J_s\in \bR^{3\times 2}$ is the associated Jacobian matrix.

Throughout this paper, 
we write $a\preceq b$, when
$a\le C\, b$ with a generic constant $C$ independent of $a, b$, and the mesh size.

\section{Lowest-order composite  finite elements on a pyramid}
In this section, we present the lowest-order composite  finite element family on a regular pyramid.
These finite elements have degrees of freedom compatible with standard finite elements on tetrahedra and hexahedra. 
Moreover, the set functions used to define element reduce to those used for tetrahedra and hexahedra on the faces. They
can be naturally combined with 
the standard, lowest-order, {Raviart-Thomas-\NED}finite elements to provide 
$H^1$-, $H(\mathrm{curl})$-, $H(\mathrm{div})$-, and $L^2$-conforming finite element spaces on a conforming hybrid mesh consisting of 
tetrahedra, pyramids, and hexahedra. 
Commuting diagrams are also respected at both element level on a single pyramid and globally on a hybrid mesh.

The classic definition of a finite element \cite{BrennerScott08,Ciarlet78}
as a triple is recalled below for convenience:
\begin{definition}
A finite element consists of a triple 
$(K, \mathcal{P}_K, \Sigma_K)$ where:
\begin{enumerate}
 \item the {\em \textbf{element domain}} $K\subset \bR^n$  is a bounded closed set with nonempty interior and piece-wise smooth boundary,
 \item the space of {\em\textbf{shape functions}} $\mathcal{P}_K$ 
 is a finite-dimensional space of functions on $K$,
 \item the set of {\em \textbf{nodal variables}} $\Sigma_K=\{N_1^K,\cdots, N_k^K\}$, or {\em \textbf{degrees of freedom}},  forms a basis for $\mathcal{P}_K'$, where $\mathcal{P}_K'$ denotes the dual space of $\mathcal{P}_K$.
\end{enumerate}
\end{definition}

We shall specify our spaces in terms of the {\it nodal basis}
$\{\phi_1,\phi_2,\cdots, \phi_k\}$ for $\mathcal{P}_K$ 
dual to $\Sigma_K$ (i.e. $N_i(\phi_j)= \delta_{ij}$).

To begin, 
recall that 
the regular pyramid $K$ is split into two tetrahedra $T_1$ and $T_2$  by connecting vertices $\vt_1$ and $\vt_3$, as shown in Figure \ref{fig-pyramid}. 
The barycentric coordinates on each tetrahedron will be used to construct our basis functions.
Let $\lambda_i^{T_\ell}$ ($i=1,2,3,4,\; \ell = 1,2$) denote the affine function that vanishes at 
all vertices of the tetrahedron $T_\ell$ except the $i$-th vertex, where it attains value $1$.
For example, with vertices ordered as in Figure \ref{fig-pyramid}, 
$\lambda_1^{T_1}$ is the affine function, which vanishes on the face $\fc_{125}$ and takes the value $1$ at 
the vertex $\vt_3$.

Next, we present a new, lowest order, finite element sequence on the pyramid which we view as a composite
element formed from the tetrahedra $T_1$ and $T_2$. We then show that the elements satisfy the  exact sequence 
property and exhibit explicit interpolation
operators which are shown to satisfy the commuting diagram property. 

\subsection{The $H^1$-conforming finite element}
We present the lowest-order $H^1$-conforming composite finite element on 
a regular pyramid.
\begin{definition}
\label{H1-def}
The lowest-order 
composite $H^1$-conforming finite element is is defined by 
$(K, S(K), \Sigma^S_K)$ where
\begin{itemize}
\item $K$ is a regular pyramid with parallelogram base,
 \item
 \begin{alignat}{3}
\label{H1}
S(K) :=&\; \{v \in H^1(K):&& \;
v\restrict{T_\ell}\in \pol_2(T_\ell),\;\; \ell = 1,2,\\
& &&\;\;\; v\restrict{\fc}\in \pol_1(\fc),\quad\forall \fc\in \FC_t(K),\nonumber\\
& &&\;\;\; v\restrict{\fc}\in \qol_{1}(\fc),\quad\forall \fc\in \FC_s(K).\} \nonumber
\end{alignat}
\item $\Sigma_K^S = \mathrm{span}\{N_i^S:\; i=1,2,3,4,5\}$ where 
\[
N_i^S:= N_i^{\vt}: v\longrightarrow v(\vt_i)\quad \forall \vt_i\in \VT_K.
\]
are point evaluation functionals at the vertices of $K$.
\end{itemize}
\end{definition}

\begin{lemma}
\label{lemma-1}
The triplet  $(K,S(K), \Sigma^{S}_K)$ is a finite element of dimension 5.
The set $\{\phi^S_i:\;i=1,2,\cdots,5\}$ is the nodal basis dual to 
$\Sigma^S_K$, where 
 \begin{alignat*}{3}
  \phi_1^S =&\;
  \left\{ \begin{tabular}{l l}
            $\lambda_2^{T_1}-\lambda_1^{T_1}\lambda_2^{T_1}$ & on $T_1$\vspace{0.2cm}\\
            $\lambda_1^{T_2}-\lambda_1^{T_2}\lambda_2^{T_2}$ & on $T_2$
            \end{tabular}\right.,
\quad &&  
  \phi_2^S =\left\{ \begin{tabular}{l l}
            $\lambda_3^{T_1}+\lambda_1^{T_1}\lambda_2^{T_1}$ & on $T_1$\vspace{0.2cm}\\
            $\lambda_1^{T_2}\lambda_2^{T_2}$ & on $T_2$
            \end{tabular}\right.,\\
\phi_3^S =&
\left\{ \begin{tabular}{l l}
            $\lambda_1^{T_1}-\lambda_1^{T_1}\lambda_2^{T_1}$ & on $T_1$\vspace{0.2cm}\\
            $\lambda_2^{T_2}-\lambda_1^{T_2}\lambda_2^{T_2}$ & on $T_2$
            \end{tabular}\right.,
\quad &&  
  \phi_4^S =
  \left\{ \begin{tabular}{l l}
            $\lambda_1^{T_1}\lambda_2^{T_1}$ & on $T_1$\vspace{0.2cm}\\
            $\lambda_3^{T_2}+\lambda_1^{T_2}\lambda_2^{T_2}$ & on $T_2$
            \end{tabular}\right.,
\\
  \phi_5^S =&
   \left\{ \begin{tabular}{l l}
            $\lambda_4^{T_1}$ & on $T_1$\vspace{0.2cm}\\
            $\lambda_4^{T_2}$ & on $T_2$
            \end{tabular}\right.,
 \end{alignat*}
 i.e., $
N_i^S (\phi_j^S) = \delta_{ij}. 
$
\end{lemma}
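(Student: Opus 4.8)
The plan is to verify three things in sequence: that $S(K)$ has dimension $5$, that the five functions $\phi_i^S$ listed actually belong to $S(K)$, and that they satisfy the duality relations $N_i^S(\phi_j^S)=\delta_{ij}$; together with the standard unisolvence argument these give the claim.

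First I would pin down $S(K)$ as a vector space. A function $v\in S(K)$ is quadratic on each of $T_1,T_2$, so it is determined on $T_1$ by its $6$ standard quadratic degrees of freedom (values at $4$ vertices and $6$ edge midpoints, but a quadratic on a tetrahedron has dimension $10$, so one should instead just say: $v|_{T_\ell}$ is a quadratic polynomial, i.e. $10$ free coefficients each). The constraints are: (i) continuity across the internal triangle $\fc_{135}=[\vt_1,\vt_3,\vt_5]$; (ii) on each of the four triangular boundary faces $v$ restricts to an \emph{affine} (not merely quadratic) function; (iii) on the base parallelogram $\fc_{1234}$, $v$ restricts to a function in $\qol_1$. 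I would count: a quadratic on a triangle has dimension $6$; requiring it to be affine kills $3$ of those; the base face $\fc_{1234}$ is split by the diagonal $[\vt_1,\vt_3]$ into two triangles, and on each triangle $v$ is quadratic, but the $\qol_1$ condition forces $v$ on the base to be the bilinear interpolant of its four corner values — this is the key structural constraint. Doing the linear algebra carefully (either by brute-force dimension count of the constraint system, or, more cleanly, by exhibiting a basis and showing it spans) yields $\dim S(K)=5$. I expect the cleanest route is actually to skip an independent dimension count and instead show directly that the five $\phi_i^S$ lie in $S(K)$ and are linearly independent, and that any $v\in S(K)$ is determined by its five vertex values — then $\dim S(K)\le 5\le \dim S(K)$ forces equality and simultaneously proves unisolvence.

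So the core steps are: (a) \textbf{Membership.} For each $\phi_i^S$, check it is globally $H^1$, i.e. continuous across $\fc_{135}$: on that face $\lambda_1^{T_1},\lambda_2^{T_1}$ correspond to $\lambda_2^{T_2},\lambda_1^{T_2}$ respectively (from the vertex orderings in Figure \ref{fig-pyramid}, $\vt_1,\vt_3,\vt_5$ are shared), and on $\fc_{135}$ one has $\lambda_1\lambda_2$ matching and the linear parts matching — a short per-face computation. Then check that on each triangular boundary face the product term $\lambda_1^{T_\ell}\lambda_2^{T_\ell}$ either vanishes identically (when the face contains a vertex where one of $\lambda_1,\lambda_2$ is forced to zero over the whole face) so that $\phi_i^S$ is affine there; and finally check on the base $\fc_{1234}$: on $T_1\cap\fc_{1234}$ and $T_2\cap\fc_{1234}$ the relevant $\lambda$'s are the barycentric coordinates of the two half-triangles of the parallelogram, and one verifies the pieces glue to the bilinear function with the prescribed corner values (e.g. $\phi_1^S$ restricts to the bilinear basis function at $\vt_1$, etc.). (b) \textbf{Duality.} Evaluate each $\phi_j^S$ at each vertex $\vt_i$: since each $\lambda$ is $0$ or $1$ at vertices and the products $\lambda_1\lambda_2$ vanish at all vertices, this is immediate and gives $\delta_{ij}$. (c) \textbf{Unisolvence.} Given $v\in S(K)$ with $v(\vt_i)=0$ for all $i$: on the base face $v\in\qol_1$ vanishing at all four corners forces $v\equiv 0$ on $\fc_{1234}$; on each triangular face $v$ affine vanishing at the three vertices forces $v\equiv 0$ there; hence $v$ vanishes on all of $\partial T_\ell$ for $\ell=1,2$ \emph{except} it could be nonzero in the interior — but a quadratic on a tetrahedron vanishing on the whole boundary must be a multiple of the unique quartic bubble... no: a quadratic vanishing on all four faces is $0$ unless... actually the product of two opposite-edge linear forms is quadratic and vanishes on no face — so one must argue via the internal face too: $v|_{T_1}$ is quadratic, vanishing on the three outer faces $\fc_{125},\fc_{145},\fc_{1234}$-pieces and, by continuity, its trace on $\fc_{135}$ equals that of $v|_{T_2}$; a careful look shows the only quadratics on $T_1$ vanishing on its three non-$\fc_{135}$ faces are spanned by $\lambda_1^{T_1}\lambda_2^{T_1}$ (the product of the two barycentrics not associated to those faces), and matching across $\fc_{135}$ forces the coefficient to be zero.

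The main obstacle is step (c), the unisolvence / dimension bound: one must show that the space of quadratics on $T_1$ that vanish on the three faces $\fc_{125},\fc_{145}$ and the triangle $[\vt_1,\vt_2,\vt_3]$ (the $T_1$-half of the base) is exactly one-dimensional, spanned by $\lambda_1^{T_1}\lambda_2^{T_1}$ — and then that the compatibility across the interior face $\fc_{135}$ together with the corresponding one-dimensional freedom on $T_2$ pins the surviving bubble coefficient, because the trace of $\lambda_1^{T_1}\lambda_2^{T_1}$ on $\fc_{135}$ is a nonzero quadratic in one variable that cannot be cancelled by the analogous $T_2$ bubble unless both coefficients vanish. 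Everything else — membership and duality — is a routine but slightly tedious check of barycentric identities on the shared faces, for which the vertex labelling conventions of Figure \ref{fig-pyramid} must be used scrupulously.
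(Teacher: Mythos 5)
Your overall scaffolding (exhibit the five functions, check membership and duality to get $\dim S(K)\ge 5$, then prove unisolvence to squeeze the dimension) is exactly the paper's route, and steps (a) and (b) are fine. The genuine problem is in your step (c), which you yourself flagged as the main obstacle. First, a bookkeeping slip: with $T_1=[\vt_3,\vt_1,\vt_2,\vt_5]$ and $T_2=[\vt_1,\vt_3,\vt_4,\vt_5]$, the outer triangular faces of $T_1$ are $\fc_{125}$ and $\fc_{235}$ (not $\fc_{145}$, which belongs to $T_2$). More seriously, your key claim --- that the quadratics on $T_1$ vanishing on its three non-$\fc_{135}$ faces form a one-dimensional space spanned by $\lambda_1^{T_1}\lambda_2^{T_1}$ --- is false: $\lambda_1^{T_1}\lambda_2^{T_1}$ vanishes on $\fc_{125}=\{\lambda_1^{T_1}=0\}$ and $\fc_{235}=\{\lambda_2^{T_1}=0\}$, but it does \emph{not} vanish on the base triangle $[\vt_1,\vt_2,\vt_3]=\{\lambda_4^{T_1}=0\}$ (it is positive on the interior of the diagonal edge $[\vt_1,\vt_3]$). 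And the proposed rescue via the interior face cannot work: the traces of $\lambda_1^{T_1}\lambda_2^{T_1}$ and of $\lambda_1^{T_2}\lambda_2^{T_2}$ on $\fc_{135}$ are \emph{identical} (both equal the product of the barycentric coordinates of the triangle $[\vt_1,\vt_3,\vt_5]$ attached to $\vt_1$ and $\vt_3$), so continuity across $\fc_{135}$ only forces the two bubble coefficients to be equal, not to vanish; the glued bubble is a perfectly continuous piecewise quadratic vanishing at all five vertices. What excludes it from $S(K)$ is precisely the base-face condition $v\restrict{\fc_{1234}}\in\qol_1(\fc_{1234})$, not interelement continuity.

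The correct argument is simpler and is the one the paper uses: once the vertex values vanish, the trace conditions give $v\equiv 0$ on every face of $\partial K$ (linear on each triangular face with three zero vertex values; bilinear on the parallelogram with four zero corner values). Then $v\restrict{T_\ell}\in\pol_2(T_\ell)$ vanishes on \emph{three} distinct planes, namely the two outer triangular faces and the corresponding half of the base. A quadratic vanishing on two distinct planes is a scalar multiple of the product of the two associated linear forms, and that product cannot vanish identically on a third distinct plane; hence $v\restrict{T_\ell}=0$ for $\ell=1,2$, with no appeal to the interior face $\fc_{135}$ at all. With this correction your step (c) closes, and the rest of your proposal coincides with the paper's proof.
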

\begin{proof}
First, it is easy to check that each of the above functions $\phi_i^S$ belongs to $S(K)$ and satisfies 
$N_i^S(\phi_j^S)=\delta_{ij}$. Hence, 
\begin{align}
\label{h1x}
\dim S(K)\ge 
\dim \left(\mathrm{span}\{\phi_i^S:\;1\le i\le 5\}\right) = 5.
\end{align}

Now, we prove unisolvence of the set of degrees of freedom $\Sigma^S_K$.
Let $u\in S(K)$ be such that $N_i^S (u) = 0$ for $i= 1,2,\cdots,5$.
In other words,  $u$ vanishes at all vertices of $K$. 
Since the restriction of $u$ to a triangular face is a linear polynomial or to the quadrilateral face is a bilinear polynomial, 
we immediately have 
$u\restrict{\fc} = 0$ on all faces $\fc\in\FC(K)$. 
Hence, $u\restrict{T_\ell}\in \pol_2(T_\ell)$, for $\ell=1,2$, and $u$
vanishes on three faces of $T_\ell$.
This implies that $u=0$. Hence, we have 
$\dim S(K) \le \dim \Sigma^S_K\le 5$.
Inequality \eqref{h1x} now shows 
$\dim S(K) = 5$, and the unisolvence of the set $\Sigma^S_K$ follows.
\end{proof}

\subsection{The $H(\mathrm{curl})$-conforming finite element}
We present the lowest-order $H(\mathrm{curl})$-conforming composite finite element on 
a regular pyramid.
To simplify notation, we label the edges as follows:
\begin{align*}
\egI_{1} = \eg_{12}, \;
\egI_{2} = \eg_{14},\;
\egI_{3} = \eg_{15},\;
\egI_{4} = \eg_{23},\\
\egI_{5} = \eg_{25},\;
\egI_{6} = \eg_{34},\;
\egI_{7} = \eg_{35},\;
\egI_{8} = \eg_{45},
\end{align*}
and orient the edges so that the the tangential vector $\b t_i$ on $\egI_{i}$ point from the 
 lower vertex number to higher vertex number.

\begin{definition}
\label{Hcurl-def}
The lowest-order 
composite $H(\mathrm{curl})$-conforming finite element is is defined by 
$(K, \b E(K), \Sigma^{\b E}_K)$ where
\begin{itemize}
\item $K$ is a regular pyramid with parallelogram base,
 \item 
 \begin{alignat}{3}
\label{Hcurl}
\bld E(K) :=&\; \{\b v \in H(\mathrm{curl},K):&& \;
\b v\restrict{T_\ell}\in \pol_1(T_\ell)^3,\;\; \ell = 1,2,\\
& &&\!\!\!\!\!\!\!\!\!\!\!\!\!\!\!\!\!\!\b n\times (\b v\times \b n)\restrict{\fc}\in \NDOne{0}(\fc),\quad\forall\, \fc\in \FC(K)\},\nonumber
\end{alignat}
where $\b n$ is the unit outward normal vector on face $\fc$.
\item $\Sigma_K^{\b E}=\mathrm{span}\{N_i^{\b E}:\;1\le i\le 8\}$, where
\[
 N_i^{\b E}:= N_i^{\egI}: \b v\longrightarrow \int_{\egI_i} \b v\cdot \b t_i\,\mathrm{ds} \quad \forall\, \egI_i\in \EG_K.
\]
\end{itemize}
\end{definition}

We denote the {\it Whitney edge form} on the tetrahedral $T_\ell$, $\ell=1,2$, as
\[
\b\omega_{ij}^{T_\ell}: = \lambda_i^{T_\ell}\grads\lambda_j^{T_{\ell}}
-\lambda_j^{T_\ell}\grads\lambda_i^{T_{\ell}}.
\]

A basis for the space $\b E(K)$ is given as follows.
\begin{lemma}
\label{lemma-2}
The triplet  $(K,\b E(K), \Sigma^{\b E}_K)$ is a finite element of dimension 8.
The set $\{\b\varphi^{\b E}_i:\;i=1,2,\cdots,8\}$ is the nodal basis dual to 
$\Sigma^{\b E}_K$, where 
 \begin{alignat*}{3}
 \b\varphi^{\b E}_1 =&
\left\{ \begin{tabular}{l l}
            $\b\omega_{23}^{T_1}+\lambda_2^{T_1}\grads\lambda_1^{T_1}$ & on $T_1$\vspace{0.2cm}\\
            $\lambda_1^{T_2}\grads\lambda_2^{T_2}$ & on $T_2$
            \end{tabular}\right.,
\quad && 
  \b\varphi^{\b E}_2 =
  \left\{ \begin{tabular}{l l}
            $\lambda_2^{T_1}\grads\lambda_1^{T_1}$ & on $T_1$\vspace{0.2cm}\\
            $\b\omega_{13}^{T_2}+\lambda_1^{T_2}\grads\lambda_2^{T_2}$ & on $T_2$
            \end{tabular}\right.,
\\
  \b\varphi^{\b E}_3 =&\left\{ \begin{tabular}{l l}
            $\b\omega_{24}^{T_1}$ & on $T_1$\vspace{0.2cm}\\
            $\b\omega_{14}^{T_2}$ & on $T_2$
            \end{tabular}\right.,
\quad &&  
\b\varphi^{\b E}_4 =
  \left\{ \begin{tabular}{l l}
            $\b\omega_{31}^{T_1}-\lambda_1^{T_1}\grads\lambda_2^{T_1}$ & on $T_1$\vspace{0.2cm}\\
            $-\lambda_2^{T_2}\grads\lambda_1^{T_2}$ & on $T_2$
            \end{tabular}\right.,
\\
\b\varphi^{\b E}_5 =&
\left\{ \begin{tabular}{l l}
            $\b\omega_{34}^{T_1}$ & on $T_1$\vspace{0.2cm}\\
            $0$ & on $T_2$
            \end{tabular}\right.,
\quad &&  
\b\varphi^{\b E}_6 =
\left\{ \begin{tabular}{l l}
            $\lambda_1^{T_1}\grads\lambda_2^{T_1}$ & on $T_1$\vspace{0.2cm}\\
            $\b\omega_{23}^{T_2}+\lambda_2^{T_2}\grads\lambda_1^{T_2}$ & on $T_2$
            \end{tabular}\right.,
            \\
  \b\varphi^{\b E}_7 =&
  \left\{ \begin{tabular}{l l}
            $\b\omega_{14}^{T_1}$ & on $T_1$\vspace{0.2cm}\\
            $\b\omega_{24}^{T_2}$ & on $T_2$
            \end{tabular}\right.,
            \quad &&
\b\varphi^{\b E}_8 =
\left\{ \begin{tabular}{l l}
            $0$ & on $T_1$\vspace{0.2cm}\\
            $\b\omega_{34}^{T_2}$ & on $T_2$
            \end{tabular}\right.,
\end{alignat*}
i.e., $N_i^{\b E}(\b\varphi_j^{\b E}) = \delta_{ij}$.
\end{lemma}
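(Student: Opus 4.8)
The plan is to mimic the structure of the proof of Lemma \ref{lemma-1}: first establish a dimension lower bound by exhibiting $8$ explicit functions in $\b E(K)$ that form a dual basis to $\Sigma_K^{\b E}$, and then prove unisolvence, which gives a matching upper bound $\dim \b E(K)\le 8$. The two bounds together force equality and yield unisolvence of the degrees of freedom.

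For the lower bound I would first check that each of the eight candidate functions $\b\varphi_i^{\b E}$ lies in $\b E(K)$. Membership in $H(\mathrm{curl},K)$ requires that the tangential trace across the internal face $\fc_{135}$ separating $T_1$ and $T_2$ agrees from both sides; this is where the specific choices (the Whitney forms $\b\omega_{ij}^{T_\ell}$ together with the correction terms $\pm\lambda_i^{T_\ell}\grads\lambda_j^{T_\ell}$) have been rigged so that the two polynomial pieces restrict to the same tangential field on $\fc_{135}$. Next one checks that each piece is in $\pol_1(T_\ell)^3$ — immediate since Whitney forms and the products $\lambda_i\grads\lambda_j$ are affine-times-constant — and that the tangential trace $\b n\times(\b v\times\b n)$ on each face of $K$ lies in the lowest-order \NED space of that face: on triangular faces the trace of a Whitney form is the lowest-order \NED/rotated-$\RT$ field, and on the parallelogram base $\fc_{1234}$ one verifies that the combined trace lies in $\NDOne{0}(\fc_s)$ (this uses that the base is a genuine parallelogram, so the bilinear map is affine and the trace is of the correct tensor-product form). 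Finally, the duality $N_i^{\b E}(\b\varphi_j^{\b E})=\delta_{ij}$ is a finite set of line-integral computations of tangential components of Whitney forms along edges, using the standard fact $\int_{\eg_{ij}}\b\omega_{ij}\cdot\b t\,\mathrm{ds}=1$ and that $\int_\eg (\lambda_i\grads\lambda_j)\cdot\b t\,\mathrm{ds}$ vanishes on edges not touching both $\vt_i$ and $\vt_j$, is handled carefully on the edge $\eg_{13}$ (which is internal on each tetrahedron but not an edge of $K$), and cancels appropriately on $\eg_{35}$, $\eg_{45}$, etc. This gives $\dim \b E(K)\ge 8$.

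For unisolvence, let $\b u\in\b E(K)$ with $N_i^{\b E}(\b u)=0$ for all $i$, i.e. all eight edge moments of $\b u$ vanish. The key point is to transfer this to information on each tetrahedron $T_\ell$ separately. Each $\b u\restrict{T_\ell}\in\pol_1(T_\ell)^3=\ND_0(T_\ell)$, whose degrees of freedom are the six edge moments of $T_\ell$. Four of these six edges are edges of $K$ (for $T_1$: $\eg_{12},\eg_{23},\eg_{15},\eg_{25}$ say, the base edge $\eg_{13}$ plus the apex edge — one must track the exact correspondence from Figure \ref{fig-pyramid}), so those moments are zero; the remaining edge moments are along the internal diagonal $\eg_{13}$ and one other shared edge. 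On those I would argue that the vanishing of the face trace on $\fc_{135}$ — forced by the $H(\mathrm{curl})$ conformity together with $\b n\times(\b u\times\b n)\restrict{\fc}\in\NDOne{0}(\fc)$ for the faces of $K$ adjacent to $\eg_{13}$, plus the already-known zero moments — pins down the diagonal moments to be zero as well. Once all six edge moments of $\ND_0(T_\ell)$ vanish, $\b u\restrict{T_\ell}=0$ by unisolvence of the tetrahedral \NED element, for $\ell=1,2$, hence $\b u\equiv0$. Therefore $\dim\b E(K)\le\dim\Sigma_K^{\b E}\le 8$, and combined with the lower bound we get $\dim\b E(K)=8$ and unisolvence.

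I expect the main obstacle to be the unisolvence step, specifically controlling the tangential behaviour of $\b u$ on the internal face $\fc_{135}$ and the diagonal edge $\eg_{13}$: unlike the $H^1$ case, where vanishing at all five vertices together with the low-degree face traces instantly kills $\b u$ on every face of $K$, here the eight edge moments do not obviously control the two ``hidden'' internal diagonal moments. The argument must exploit the constraint $\b n\times(\b v\times\b n)\restrict{\fc}\in\NDOne{0}(\fc)$ on the triangular faces of $K$ (which limits the face trace to a two-parameter family already determined by two of the zero edge moments) together with tangential continuity across $\fc_{135}$ to conclude that the trace on $\fc_{135}$ — and in particular its component along $\eg_{13}$ — must vanish. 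A secondary, more routine, obstacle is verifying that the combined tangential trace of each basis function on the parallelogram base $\fc_{1234}$ really lands in $\NDOne{0}(\fc_s)$ rather than some larger space, which is exactly where the ``parallelogram base'' hypothesis on $K$ is used; I would reduce this to a direct computation in barycentric coordinates on the two base triangles $\fc_{123}$ and $\fc_{134}$, checking the two pieces match up to the correct tensor-product rotated-$\RT$ field.
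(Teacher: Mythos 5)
Your lower-bound half (checking that the eight functions $\b\varphi_i^{\b E}$ lie in $\b E(K)$, including tangential continuity across the internal face, and computing $N_i^{\b E}(\b\varphi_j^{\b E})=\delta_{ij}$) is exactly what the paper does. The genuine gap is in your unisolvence step. You assert $\b u\restrict{T_\ell}\in\pol_1(T_\ell)^3=\ND_0(T_\ell)$ and plan to conclude $\b u\restrict{T_\ell}=0$ from the vanishing of the six edge moments of $T_\ell$. But $\pol_1(T_\ell)^3$ is the full $12$-dimensional space of linear vector fields (a second-kind space), not the $6$-dimensional first-kind space $\ND_0(T_\ell)$, and the definition \eqref{Hcurl} only constrains the restriction to lie in $\pol_1(T_\ell)^3$. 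A linear field is not determined by its six edge moments: for instance $\grads(\lambda_1^{T_\ell}\lambda_2^{T_\ell})$ is linear, nonzero, and has vanishing tangential moment along every edge of $T_\ell$. So even if you succeed in forcing the moment on the internal diagonal $\eg_{13}$ to vanish, your final reduction does not close. (Smaller slips: five of the six edges of each $T_\ell$ are edges of $K$, not four, and the trace constraint on a triangular face gives a three-parameter family determined by three edge moments, not a two-parameter one.)

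The paper's unisolvence argument avoids the internal face and the diagonal edge altogether, and is the fix you need. By \eqref{Hcurl}, $\b n\times(\b u\times\b n)\restrict{\fc}\in\NDOne{0}(\fc)$ for every face $\fc\in\FC(K)$, and the lowest-order two-dimensional N\'ed\'elec space on a triangle (resp.\ parallelogram) is uniquely determined by its three (resp.\ four) edge-tangential moments. Since all eight edge moments of $K$ vanish, the tangential trace of $\b u$ vanishes on all five faces of $K$. Each tetrahedron $T_\ell$ has three of its four faces contained in $\partial K$, so $\b u\restrict{T_\ell}$ is a linear vector field whose tangential trace vanishes on three faces of $T_\ell$: on each edge shared by two such faces, $\b u$ is parallel to two distinct face normals and hence zero, and an affine field vanishing on the three linearly independent edges through the common vertex is identically zero. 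Thus $\b u=\b 0$ on both $T_1$ and $T_2$, giving $\dim\b E(K)=8$ and unisolvence, with no need to control the trace on $\fc_{135}$ or the moment along $\eg_{13}$.
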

\begin{proof}
The proof is similar to that for the $H^1$ case.
First, we can verify directly from the definition that each function $\b\varphi_i^{\b E}$ 
belongs to $\b E(K)$ and satisfies 
$N_i^{\b E}(\b\varphi_j^{\b E})=\delta_{ij}$. Hence, 
\begin{align*}
\dim \b E(K)\ge 
\dim \left(\mathrm{span}\{\b\varphi_i^{\b E}:\;1\le i\le 8\}\right) = 8.
\end{align*}

Next, we prove unisolvence of the set of degrees of freedom $\Sigma^E_K$.
Let $\b u\in \b E(K)$ be such that $N_i^{\b E} (\b u) = 0$ for $i= 1,2,\cdots,8$.
In other words, the tangential trace of $\b u$ vanishes on all edges of $K$. 
Since the tangential trace of $\b u$ on a face belongs to
the lowest-order \NED\!\! space of the first kind,
we immediately have 
the tangential traces of $\b u$ vanishes on all faces of $K$.
Hence, $\b u\restrict{T_\ell}\in \pol_1(T_\ell)^3$, for $\ell=1,2$, and $u$ has vanishing tangential traces on three faces of $T_\ell$.
This implies that $\b u=\b 0$.
Hence, $\dim \b E(K) = 8$, and the set $\Sigma_K^{\b E}$ is unisolvent. 
\end{proof}

\subsection{The $H(\mathrm{div})$-conforming finite element}
We present the lowest-order $H(\mathrm{div})$-conforming composite finite element on 
a regular pyramid.
To simplify notation, let faces be labelled such that 
\begin{align*}
\fcI_{1} = \fc_{125}, \;
\fcI_{2} = \fc_{145},\;
\fcI_{3} = \fc_{235},\;
\fcI_{4} = \fc_{345},\;
\fcI_{5} = \fc_{1234}.
\end{align*}

\begin{definition}
\label{Hdiv-def}
The lowest-order 
composite $H(\mathrm{div})$-conforming finite element is  defined by 
$(K, \b V(K), \Sigma^{\b V}_K)$ where
\begin{itemize}
\item $K$ is a regular pyramid with parallelogram base,
 \item 
 \begin{alignat}{3}
\label{Hdiv}
\bld V(K) :=&\; \{\b v \in H(\mathrm{div},K):&& \;
\b v\restrict{T_\ell}\in \pol_0(T_\ell)^3\oplus \bld x\pol_0(T_\ell),\;\; \ell = 1,2,\\
& &&\;\;\b v\cdot \b n\restrict{\fc}\in \pol_{0}(\fc),\quad\forall \fc\in \FC(K)\},\nonumber
\end{alignat}
where $\b n$ is the unit outward normal vector on face $\fc$.
\item 
$\Sigma_K^{\b V}=\mathrm{span}\{N_i^{\b V}:\;1\le i\le 6\}$, where
\[
N_i^{\b V}:= N_i^{\fcI}: \b v\longrightarrow \int_{\fcI_i} \b v\cdot \b n\,\mathrm{ds} \quad \forall\, \fcI_i\in \FC_K,
\]
are the total normal fluxes over each face $\fcI_i\in \FC_K$,
and 
\[
N_6^{\b V}:= N_0^{\mathrm{C}}: \b v\longrightarrow 
\int_{T_1} \divs\b v\,\mathrm{dx}-
 \int_{T_2} \divs\b v\,\mathrm{dx}.
\]
is a cell-based degrees of freedom.
\end{itemize}
\end{definition}

We denote the  {\it Whitney face form} on the tetrahedral $T_\ell$, $\ell=1,2$ as
\[
\b\chi_{ijk}^{T_\ell}: = \lambda_i^{T_\ell}\grads\lambda_j^{T_{\ell}}\times\grads\lambda_k^{T_{\ell}}
+\lambda_j^{T_\ell}\grads\lambda_k^{T_{\ell}}\times\grads\lambda_i^{T_{\ell}}
+\lambda_k^{T_\ell}\grads\lambda_i^{T_{\ell}}\times\grads\lambda_j^{T_{\ell}} .
\]
A basis for the space $\b V(K)$ is given as follows.
\begin{lemma}
\label{lemma-3}
The triplet  $(K,\b V(K), \Sigma^{\b V}_K)$ is a finite element of dimension 6.
The set $\{\b\psi^{\b V}_i:\;i=1,2,\cdots,6\}$ is the nodal basis dual to 
$\Sigma^{\b V}_K$, where 
 \begin{alignat*}{3}
 \b\psi^{\b V}_1 =&
\left\{ \begin{tabular}{l l}
            $2\,\b\chi_{234}^{T_1}-\b\chi_{124}^{T_1}$ & on $T_1$\vspace{0.2cm}\\
            $\b\chi_{124}^{T_2}$ & on $T_2$
            \end{tabular}\right.,
\quad && 
  \b\psi^{\b V}_2 =
  \left\{ \begin{tabular}{l l}
            $\b\chi_{124}^{T_1}$ & on $T_1$\vspace{0.2cm}\\
            $2\,\b\chi_{143}^{T_2}-\b\chi_{124}^{T_2}$ & on $T_2$
            \end{tabular}\right.,
\\
  \b\psi^{\b V}_3 =&\left\{ \begin{tabular}{l l}
            $2\,\b\chi_{143}^{T_1}-\b\chi_{124}^{T_1}$ & on $T_1$\vspace{0.2cm}\\
            $\b\chi_{124}^{T_2}$ & on $T_2$
            \end{tabular}\right.,
\quad &&  
\b\psi^{\b V}_4 =
  \left\{ \begin{tabular}{l l}
            $\b\chi_{124}^{T_1}$ & on $T_1$\vspace{0.2cm}\\
            $2\,\b\chi_{234}^{T_2}-\b\chi_{124}^{T_2}$ & on $T_2$
            \end{tabular}\right.,
\\
\b\psi^{\b V}_5 =&
\left\{ \begin{tabular}{l l}
            $\b\chi_{132}^{T_1}$ & on $T_1$\vspace{0.2cm}\\
            $\b\chi_{132}^{T_2}$ & on $T_2$
            \end{tabular}\right.,
\quad &&  
\b\psi^{\b V}_6 =
\left\{ \begin{tabular}{l l}
            $\b\chi_{124}^{T_1}$ & on $T_1$\vspace{0.2cm}\\
            $-\b\chi_{124}^{T_2}$ & on $T_2$
            \end{tabular}\right.,
\end{alignat*}
i.e., $N_i^{\b V}({\b\psi}^{\b V}_j) = \delta_{ij}$.
\end{lemma}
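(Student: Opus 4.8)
The plan is to mimic exactly the structure of the proofs of Lemma~\ref{lemma-1} and Lemma~\ref{lemma-2}: first exhibit the explicit nodal basis and verify $N_i^{\b V}(\b\psi_j^{\b V})=\delta_{ij}$ by direct computation, which gives the lower bound $\dim \b V(K)\ge 6$; then prove unisolvence of $\Sigma_K^{\b V}$ to obtain the matching upper bound $\dim \b V(K)\le 6$; the two bounds together yield that the triple is a finite element of dimension $6$ with the stated nodal basis.

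\medskip

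\textbf{Step 1: membership and duality.} First I would check that each $\b\psi_j^{\b V}$ lies in $\b V(K)$. On each tetrahedron $T_\ell$ the Whitney face form $\b\chi_{ijk}^{T_\ell}$ is a constant multiple of the Raviart--Thomas basis function associated with the face opposite the fourth vertex, so any linear combination lies in $\pol_0(T_\ell)^3\oplus\b x\,\pol_0(T_\ell)$; one then checks that the normal components match across the interior face $\fc_{135}$ so that $\b\psi_j^{\b V}\in H(\mathrm{div},K)$, and that on each boundary face $\fc\in\FC(K)$ the normal trace is constant (the base parallelogram $\fc_{1234}$ is the only subtle case, since it meets both $T_1$ and $T_2$ and the normal trace must be the \emph{same} constant on both halves --- this is ensured by the coefficient choices, e.g. the $-\b\chi_{124}$ correction terms). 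Then I would evaluate the six functionals: the face functionals $N_i^{\b V}$ on the Whitney face forms reduce to the standard Raviart--Thomas duality $\int_{\fc_{ijk}}\b\chi_{ijk}^{T_\ell}\cdot\b n\,\mathrm{ds}=\pm 1$ after accounting for orientation, and the cell functional $N_6^{\b V}=N_0^{\mathrm C}$ is evaluated using $\divs\b\chi_{ijk}^{T_\ell}=\mathrm{const}$ on $T_\ell$ together with $\mathrm{div}$-theorem bookkeeping; this is the routine but bookkeeping-heavy part. This establishes $\dim\b V(K)\ge 6$.

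\medskip

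\textbf{Step 2: unisolvence.} Let $\b u\in\b V(K)$ with $N_i^{\b V}(\b u)=0$ for $i=1,\dots,6$. On each face $\fc\in\FC(K)$ the normal trace $\b u\cdot\b n\restrict{\fc}$ is constant by definition of $\b V(K)$, and its integral over $\fc$ vanishes since the first five functionals vanish (for $\fc_{1234}$ one uses that the normal trace is a single constant on the whole base); hence $\b u\cdot\b n=0$ on $\partial K$. Now restrict to $T_1$: here $\b u\restrict{T_1}\in\mathbb{RT}_0(T_1)$ and its normal trace vanishes on the three boundary faces $\fc_{125},\fc_{145},\fc_{1234}$ of $K$ that are also faces of $T_1$. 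An $\mathbb{RT}_0$ field on a tetrahedron with vanishing normal trace on three of its four faces is a scalar multiple of the Whitney form $\b\chi$ of the remaining face, namely $\b u\restrict{T_1}=c_1\,\b\chi_{135}^{T_1}$ (the interior face), and similarly $\b u\restrict{T_2}=c_2\,\b\chi_{135}^{T_2}$; by $H(\mathrm{div})$-conformity the normal components on $\fc_{135}$ must agree, forcing $c_1=\pm c_2$. Then $\divs\b u$ is the constant $c_1\cdot\mathrm{const}$ on $T_1$ and $c_2\cdot\mathrm{const}$ on $T_2$, and the cell functional $N_0^{\mathrm C}(\b u)=0$ combined with the sign relation forces $c_1=c_2=0$, so $\b u=\b 0$. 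Hence $\dim\b V(K)\le\dim\Sigma_K^{\b V}\le 6$.

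\medskip

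Combining the two inequalities gives $\dim\b V(K)=6$, the unisolvence of $\Sigma_K^{\b V}$, and the identification of $\{\b\psi_i^{\b V}\}$ as the dual nodal basis. The main obstacle I anticipate is \emph{not} the conceptual structure --- which is identical to the $H^1$ and $H(\mathrm{curl})$ cases --- but the careful treatment of the base parallelogram face $\fc_{1234}$ and the interior diagonal face $\fc_{135}$: one must verify that the constant normal trace on $\fc_{1234}$ is genuinely a single constant across the cut, that the orientation/sign conventions in the cell functional $N_0^{\mathrm C}$ are consistent with those in the Whitney forms, and that the step ``an $\mathbb{RT}_0$ field on a tetrahedron vanishing on three faces is a multiple of the fourth Whitney face form'' is justified (it follows from $\dim\mathbb{RT}_0=4$ and the fact that the four face functionals are a basis of the dual). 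Getting these signs and the parallelogram-face constant exactly right is where the real care lies; everything else is a direct verification.
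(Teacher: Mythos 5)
Your proposal is correct and takes essentially the approach the paper intends: the paper omits the proof of Lemma~\ref{lemma-3}, saying only that it is ``similar to the previous two cases,'' and your argument is exactly that duality-plus-unisolvence template of Lemmas~\ref{lemma-1} and~\ref{lemma-2}. You also correctly supply the one genuinely new ingredient this case requires --- after the five face functionals annihilate $\b u$, each restriction $\b u\restrict{T_\ell}$ is only a multiple of the Whitney form of the interior face $\fc_{135}$, and it is the normal-continuity sign relation across $\fc_{135}$ combined with the cell functional $N_0^{\mathrm{C}}$ that forces both constants to vanish --- which is precisely why the element has dimension $6$ rather than $5$.
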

\begin{proof}
The proof is again similar to the previous two cases, and is therefore omitted.
%
%
%
\end{proof}

\subsection{The $L^2$-conforming finite element}
We present the lowest-order $L^2$-conforming composite finite element on 
a regular pyramid.

\begin{definition}
\label{L2-def}
The lowest-order 
composite $H(\mathrm{div})$-conforming finite element is defined by 
$(K, \b V(K), \Sigma^{\b V}_K)$ where
\begin{itemize}
\item $K$ is a regular pyramid with parallelogram base,
 \item 
 \begin{alignat}{3}
\label{L2}
W(K) :=&\; \{\psi \in L^2(K):&& \;
\psi\restrict{T_\ell}\in \pol_0(T_\ell),\;\; i = 1,2\}.
\end{alignat}
\item 
$\Sigma_K^{W}=\mathrm{span}\{N_i^{W}:\;i=1,2\}$, where
\begin{align*}
N_1^{W} : v\longrightarrow &\;
\int_{T_1} v\,\mathrm{dx}+
 \int_{T_2} v\,\mathrm{dx},\\ 
N_2^{W} : v\longrightarrow &\;
\int_{T_1} v\,\mathrm{dx}-
 \int_{T_2} v\,\mathrm{dx}.
\end{align*}
\end{itemize}
\end{definition}

The proof of the following result is left as an exercise for the reader.
\begin{lemma}
\label{lemma-4}
The triplet  $(K,W(K), \Sigma^{W}_K)$ is a finite element of dimension 2.
The set $\{\psi^{W}_i:\;i=1,2\}$ is the nodal basis dual to 
$\Sigma^{W}_K$, where 
 \begin{alignat*}{3}
 \psi^W_1 =& 
 \divs \b\psi^{\b V}_5 =
 \left\{ \begin{tabular}{l l}
            $\frac{1}{2|T_1|}$ & on $T_1$\vspace{0.2cm}\\
            $\frac{1}{2|T_2|}$ & on $T_2$
            \end{tabular}\right.,
\quad && 
  \psi^W_2 =\divs \b\psi^{\b V}_6=
  \left\{ \begin{tabular}{l l}
            $\frac{1}{2|T_1|}$ & on $T_1$\vspace{0.2cm}\\
            $-\frac{1}{2|T_2|}$ & on $T_2$
            \end{tabular}\right..
\end{alignat*}
\end{lemma}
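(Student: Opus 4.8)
The plan is to mimic the structure of Lemmas \ref{lemma-1}--\ref{lemma-3}, but here every step is elementary because $W(K)$ carries no interelement continuity constraint. First I would observe that a function $\psi\in W(K)$ restricts to an arbitrary constant on each of the two tetrahedra $T_1,T_2$ with no coupling between the pieces, so $W(K)\cong\pol_0(T_1)\times\pol_0(T_2)$ and $\dim W(K)=2$. Writing $\psi\in W(K)$ as the pair $(c_1,c_2)$ of its constant values, the two functionals act as $N_1^W(\psi)=c_1|T_1|+c_2|T_2|$ and $N_2^W(\psi)=c_1|T_1|-c_2|T_2|$; the associated $2\times 2$ matrix has determinant $-2|T_1|\,|T_2|\neq 0$, which already proves unisolvence of $\Sigma_K^W$ and hence that the triple is a finite element of dimension $2$. (Alternatively, unisolvence follows a posteriori from the dual-basis identities below.)

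Next I would verify the explicit nodal-basis formulas by differentiating the $H(\mathrm{div})$ basis functions $\b\psi^{\b V}_5$, $\b\psi^{\b V}_6$ of Lemma \ref{lemma-3}. For any Whitney face form, using that each $\grads\lambda_m^{T_\ell}$ is constant on $T_\ell$ and that a cross product of two such gradients is divergence-free, one gets
\[
\divs\b\chi_{ijk}^{T_\ell}
=3\,\grads\lambda_i^{T_\ell}\cdot\big(\grads\lambda_j^{T_\ell}\times\grads\lambda_k^{T_\ell}\big),
\]
which is constant on $T_\ell$ with absolute value $\tfrac{3}{6|T_\ell|}=\tfrac1{2|T_\ell|}$; the sign is fixed by the orientation of $(\grads\lambda_i^{T_\ell},\grads\lambda_j^{T_\ell},\grads\lambda_k^{T_\ell})$, equivalently by the parity of $(i,j,k)$ relative to the local vertex ordering of $T_\ell$. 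The index triples $132$ and $124$ appearing in $\b\psi^{\b V}_5$ and $\b\psi^{\b V}_6$ were chosen precisely so that this sign equals $+1$, so I would read off $\divs\b\psi^{\b V}_5=\tfrac1{2|T_\ell|}$ on $T_\ell$ for $\ell=1,2$, and $\divs\b\psi^{\b V}_6=\tfrac1{2|T_1|}$ on $T_1$, $-\tfrac1{2|T_2|}$ on $T_2$. In particular $\psi^W_1:=\divs\b\psi^{\b V}_5$ and $\psi^W_2:=\divs\b\psi^{\b V}_6$ lie in $W(K)$ and are linearly independent, hence form a basis of the two-dimensional space $W(K)$.

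Finally I would check the duality relations: since $\int_{T_\ell}\tfrac1{2|T_\ell|}\,\mathrm{dx}=\tfrac12$, a one-line computation gives $N_1^W(\psi^W_1)=\tfrac12+\tfrac12=1$, $N_2^W(\psi^W_1)=\tfrac12-\tfrac12=0$, $N_1^W(\psi^W_2)=\tfrac12-\tfrac12=0$, and $N_2^W(\psi^W_2)=\tfrac12+\tfrac12=1$, i.e.\ $N_i^W(\psi^W_j)=\delta_{ij}$, so $\{\psi^W_1,\psi^W_2\}$ is the nodal basis dual to $\Sigma_K^W$. The only point requiring any care is the sign bookkeeping in the displayed divergence identity---namely confirming that the orientations induced by the local vertex orderings of $T_1$ and $T_2$ fixed in Figure \ref{fig-pyramid} make the relevant scalar triple products positive for the chosen index triples; everything else is immediate.
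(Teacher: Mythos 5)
Your proof is correct, and since the paper itself omits the argument (``left as an exercise for the reader''), your direct verification---the two-dimensionality of $W(K)$, the $2\times2$ unisolvence computation, the identity $\divs\b\chi^{T_\ell}_{ijk}=3\,\grads\lambda_i^{T_\ell}\cdot(\grads\lambda_j^{T_\ell}\times\grads\lambda_k^{T_\ell})$ with magnitude $\tfrac{1}{2|T_\ell|}$, and the duality check $N_i^W(\psi^W_j)=\delta_{ij}$---is exactly the intended computation, in the same spirit as the paper's proofs of Lemmas \ref{lemma-1}--\ref{lemma-3}. The sign bookkeeping you flag is indeed the only real content, and it comes out as you assert: with the local vertex orderings of the two tetrahedra fixed in Figure \ref{fig-pyramid} (the same convention under which the basis formulas of Lemma \ref{lemma-3} are stated), the triple products for the index triples $132$ and $124$ are positive, so $\divs\b\chi^{T_\ell}_{132}=\divs\b\chi^{T_\ell}_{124}=\tfrac{1}{2|T_\ell|}$ and the stated formulas for $\psi^W_1=\divs\b\psi^{\b V}_5$ and $\psi^W_2=\divs\b\psi^{\b V}_6$ follow.
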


\subsection{Exact sequence property}
The finite element family defined above is an exact sequence with a commuting diagram property on a regular pyramid.

Firstly, we define the following set of {\em nodal interpolation operators} defined
on the elements in the usual way:
\begin{alignat*}{6}
 \varPi_S(v) = & \sum_{i=1}^5 N_i^S(v)\phi_i^S, && \quad\forall\, v\in H^{r}(K),\\
\b\varPi_E(\b v) = &\; \sum_{i=1}^8 N_i^{\b E}(\b v)\b\varphi_i^{\b E},&& \quad\forall\,\b v\in H^{r-1}(\mathrm{curl},K),\\
\b \varPi_V(\b v) = & \sum_{i=1}^6 N_i^{\b V}(\b v)\b\psi_i^{\b V},&& \quad\forall\, \b v\in H^{r-1}(\mathrm{div},K),\\
 \varPi_W(\b v) = &\; \sum_{i=1}^2 N_i^W(v)\psi_i^W,&& \quad\forall\, v\in L^2(K),
 \end{alignat*}
 where $r>3/2$. By $H^{r-1}(\mathrm{curl}, K)$, we mean the space of all vector-valued functions in $H^{r-1}(K)^3$ whose curl is in
$H^{r-1}(K)^3$.

The results are summarized below.

\begin{theorem}
 \label{thm-commute}
 Let $K$ be a regular pyramid with a parallelogram base. Then,
 
{\em\sf{(a)}} the following sequence is exact:
\begin{align*}
 \bR
\overset{id}{\longrightarrow}
S(K)
\overset{\grads}{\longrightarrow} 
\b E( K)
\overset{\curls}{\longrightarrow} 
\b V(K)
\overset{\divs}{\longrightarrow} 
W(K)
\overset{}{\longrightarrow} 0.
\end{align*}

{\em \sf{(b)}}
The following diagram {\em commutes}:
\begin{alignat*}{2}
\begin{tabular}{l c c c c c l l}
$H^r({K})$&
$\overset{\grads}{\longrightarrow} $&
$H^{r-1}(\mathrm{curl},K)\!\!$&
$\overset{\curls}{\longrightarrow} $&
$H^{r-1}(\mathrm{div},K)\!\!$&
$\overset{\divs}{\longrightarrow} $&
$H^{r-1}(K)\;$\\
$\;\;\;\;\downarrow {\scriptstyle \varPi_S}$ & &
$\downarrow{\scriptstyle\b \varPi_E}$
& &$\downarrow{\scriptstyle\b \varPi_V}$
& &
$\;\;\downarrow{\scriptstyle \varPi_W}$
\\
$S(K)\!\!$&
$\overset{\grads}{\longrightarrow}$ &
$\b E(K)\!\!$&
$\overset{\curls}{\longrightarrow}$ &
$\b V(K)\!\!$&
$\overset{\divs}{\longrightarrow}$ &
$W(K)$.
\end{tabular}
\end{alignat*}
That is,
\begin{alignat*}{3}
 \b\varPi_E\grads v = &\; \grads \varPi_S v, \quad && \forall \, v\in H^r(K),\\
 \b\varPi_V\curls\b v = &\; \curls \b\varPi_E\b v, \quad && \forall \,\b v\in H^{r-1}(\mathrm{curl},K),\\
\varPi_W\divs v = &\; \divs\b \varPi_V\b v, \quad && \forall\,\b v \in H^{r-1}(\mathrm{div},K).
\end{alignat*}

{\em  \sf{(c)}}
The nodal interpolation operators have the following approximation properties:
\begin{itemize}
 \item [(1)]
If $v\in H^s(K)$ with $3/2<s\le 2$, then
\begin{alignat*}{3}
 \|v-\varPi_S v\|_0\preceq &\; h_K^{s}\|v\|_s,
 \;\;
  \|\grads(v-\varPi_S v)\|_0\preceq &\; h_K^{s-1}\|v\|_s.
\end{alignat*}
 \item [(2)]
If $\b v\in H^s(\mathrm{curl},K)$ with $1/2<s\le 1$, then
\begin{alignat*}{3}
 \|\b v-\b\varPi_E \b v\|_0\preceq &\; h_K^{s}
 (\|\b v\|_s+\|\curls \b v\|_s), \;\; 
 \|\curls(\b v-\b\varPi_E \b v)\|_0\preceq &\; h_K^{s}
 \|\curls \b v\|_s.
\end{alignat*}
 \item [(3)]
If $\b v\in H^s(\mathrm{div},K)$ with $1/2<s\le 1$, then
\begin{alignat*}{3}
 \|\b v-\b\varPi_V \b v\|_0
\preceq &\; h_K^{s}
\|\b v\|_s, \;\;
 \|\divs(\b v-\b\varPi_V \b v)\|_0
\preceq &\; h_K^{s}
\|\divs\b v\|_s.
\end{alignat*}
 \item [(4)]
If $v\in H^s(K)$ with $1/2<s\le 1$, then
\begin{alignat*}{3}
 \|v-\varPi_W v\|_0
\preceq &\; h_K^{s}\|v\|_s.
\end{alignat*}
\end{itemize}
Here $h_K$ is the diameter of the pyramid $K$. 
\end{theorem}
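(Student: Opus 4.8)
The plan is to treat parts (a), (b), (c) in turn, exploiting the fact that each composite pyramidal space is assembled from two tetrahedra on which the standard lowest-order $\mathbb{RT}$--N\'ed\'elec sequence is exact, and then to glue these sequences together across the internal face $\fc_{135}$ using the continuity built into the space definitions.

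For part (a), I would first verify the inclusions $\grads S(K)\subset \b E(K)$, $\curls \b E(K)\subset \b V(K)$, $\divs \b V(K)\subset W(K)$ by checking that differentiation lowers the polynomial degree on each $T_\ell$ in the prescribed way and that the trace conditions are preserved (e.g. the tangential trace of a gradient of a $\qol_1$ function on the quadrilateral face lies in $\NDOne{0}$). Exactness is then a dimension count combined with injectivity/surjectivity at each spot: $\dim S(K)=5$, $\dim \b E(K)=8$, $\dim \b V(K)=6$, $\dim W(K)=2$ from Lemmas \ref{lemma-1}--\ref{lemma-4}. Since $\grads$ has kernel $\bR$ on the connected set $K$, its image in $\b E(K)$ has dimension $4$; I must show $\curls$ kills exactly this $4$-dimensional space and no more, i.e. $\ker(\curls)\cap \b E(K)$ has dimension $4$ — a curl-free field in $\b E(K)$ restricts to a curl-free N\'ed\'elec field on each $T_\ell$, hence is a gradient of a $\pol_2$ function there, and the inter-element tangential continuity forces these to patch into an element of $S(K)$. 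Then $\mathrm{rank}(\curls)=8-4=4$, so the image sits in the $6$-dimensional $\b V(K)$ with a $2$-dimensional cokernel, which must be matched by the $2$-dimensional kernel of $\divs$ on $\b V(K)$; finally $\divs:\b V(K)\to W(K)$ is surjective because $\divs\b\psi_5^{\b V}=\psi_1^W$ and $\divs\b\psi_6^{\b V}=\psi_2^W$ span $W(K)$ (Lemma \ref{lemma-4}). An efficient way to organize this is to observe $\grads\phi_i^S$, $\curls\b\varphi_i^{\b E}$, $\divs\b\psi_i^{\b V}$ can all be computed explicitly in terms of the Whitney forms and then read off linear relations; the alternating-sum structure of the cell degrees of freedom $N_0^{\mathrm C}$ and $N_2^W$ is precisely what makes the sequence exact rather than just a complex.

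For part (b), the commuting property reduces, via the definition of the nodal interpolants, to showing that the degrees of freedom intertwine the differential operators: concretely $N_i^{\b E}(\grads v) = $ (difference of vertex values of $v$ across edge $\egI_i$) $= N^{\b E}_i(\grads \varPi_S v)$ by the fundamental theorem of calculus, which holds for $v\in H^r(K)$, $r>3/2$, since traces on edges are then well defined; similarly $N_i^{\b V}(\curls\b v)=\int_{\partial\fcI_i}\b v\cdot\b t$ by Stokes on each face (including the cell DOF $N_0^{\mathrm C}$, where $\int_{T_\ell}\divs\curls\b v = 0$ makes the identity trivially consistent with $\curls$ landing in $\ker N_0^{\mathrm C}$ appropriately — this needs a short check that $N_0^{\mathrm C}(\curls\b v)$ matches $N_0^{\mathrm C}(\curls\b\varPi_E\b v)$ using that both sides are boundary integrals over $\partial T_1$ and $\partial T_2$ of tangential traces that agree edge-by-edge), and $N_i^W(\divs\b v)=\int_{\partial}\b v\cdot\b n$ via the divergence theorem on $T_1\cup T_2$. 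Because the composite basis functions were constructed precisely so that $\grads\phi_i^S\in\b E(K)$ etc., once the DOFs commute the interpolants commute. The one genuinely pyramid-specific point is that the \emph{internal} face $\fc_{135}$ contributes with opposite signs to $N_0^{\mathrm C}$ and to $N_2^W$, so the interior flux cancels and only true boundary data enters — this is where I would be most careful.

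For part (c), the approximation estimates follow the standard Bramble--Hilbert/scaling argument but must accommodate the non-affine (bilinear) map from the reference pyramid. I would map to a fixed reference pyramid $\widehat K$, use the Piola transforms of \eqref{space-mapping} (noting the composite spaces transform compatibly because $\Phi_K$ is affine on each $T_\ell$ when the base is a parallelogram, so the Jacobian is piecewise constant and all the $h_K$-powers behave as in the affine case), establish on $\widehat K$ that $\varPi_S,\b\varPi_E,\b\varPi_V,\varPi_W$ are bounded on $H^s$ for the indicated ranges of $s$ (the lower bounds $s>3/2$, $s>1/2$ being exactly the thresholds for the trace/edge functionals to be continuous, by the trace theorem on Lipschitz domains) and reproduce the relevant polynomial/vector-polynomial subspace, then invoke Bramble--Hilbert and scale back. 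The curl and div estimates use the commuting diagram from part (b) together with the $L^2$ estimates one level down, e.g. $\|\curls(\b v - \b\varPi_E\b v)\|_0 = \|\curls\b v - \b\varPi_V\curls\b v\|_0 \preceq h_K^s\|\curls\b v\|_s$. The main obstacle I anticipate is the boundedness of the interpolation operators at the endpoint regularity (especially $\b\varPi_E$ on $H^{1}(\mathrm{curl})$ and the sharp $s>1/2$ requirement for face/edge functionals on the \emph{composite} element, where one face of $K$ is shared by both sub-tetrahedra): one must check that the edge/face moment functionals extend continuously to the fractional Sobolev spaces on each $T_\ell$ and that summing the two tetrahedral contributions does not lose the estimate — this is routine for a single simplex but needs the observation that the pyramid's sub-tetrahedra are shape-regular uniformly as the mesh is refined, which is where the "regular pyramid with parallelogram base" hypothesis is used.
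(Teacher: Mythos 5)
Your proposal is correct and follows essentially the same route as the paper: exactness deduced from the tetrahedral exact sequence together with the trace/continuity structure of the composite spaces (organized as a dimension count), commutativity by intertwining the degrees of freedom through Stokes/divergence theorems (i.e.\ integration by parts), and the estimates by polynomial reproduction, scaling and the Bramble--Hilbert lemma, with the derivative bounds obtained from the commuting diagram. Two small slips to correct in the write-up, neither fatal: the kernel of $\divs$ on $\b V(K)$ is $4$-dimensional, not $2$ (your surjectivity of $\divs$ via $\b\psi^{\b V}_5,\b\psi^{\b V}_6$ together with $\mathrm{rank}(\curls)=4$ is what closes the count), and the last square commutes not because the flux through the interior face $\fc_{135}$ cancels in $N_2^W$ (it does not), but because the cell degree of freedom $N_0^{\mathrm{C}}$ of $\b V(K)$ is by construction exactly $N_2^W\circ\divs$ and is therefore reproduced by $\b\varPi_V$.
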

\begin{proof}
 The exactness of the above sequence in part {\sf (a)} is  a direct consequence of the exactness of the following sequence on a tetrahedron $T$:
\begin{align*}
 \bR
\overset{id}{\longrightarrow}
\pol_2(T)
\overset{\grads}{\longrightarrow} 
\pol_1(T)^3
\overset{\curls}{\longrightarrow} 
\pol_0(T)^3\oplus \b x\,\pol_0(T)
\overset{\divs}{\longrightarrow} 
\pol_0(T)
\overset{}{\longrightarrow} 0,
\end{align*}
and the trace properties of the corresponding composite spaces.

The proof of commutativity in part {\sf(b)} follows from integration by parts.

The composite spaces contain (sufficiently rich) polynomial functions on the whole pyramid; specifically
\begin{alignat*}{3}
\pol_1(K)\subset&\; S(K),\quad  && 
\pol_0(K)^3\oplus \b x\times \pol_0(K)^3\subset\; \b E(K),\\
\pol_0(K)^3\oplus \b x\,\pol_0(K)\subset&\; \b V(K),
\quad&& \pol_0(K)\subset\; W(K).
\end{alignat*}
The approximation estimates in part {\sf (c)} then follow from a standard scaling argument and the Bramble-Hilbert Lemma;
see, e.g.,  \cite[Chapter 5]{Monk03}. 
\end{proof}
\begin{remark}
Similar approximation properties as Theorem 6(c) hold for the tetrahedral and hexahedral cases when the element 
$K$ is a tetrahedron or a parallelepiped with the corresponding finite element spaces given in Table 1--4; 
see \cite[Chapter 5,6]{Monk03}.
The degrees of freedom on a tetrahedron or a parallelepiped
are the usual nodal evaluations for $H^1$-conforming finite elements, 
the tangential edge integrals for $H(\mathrm{curl})$-conforming finite elements, the normal face integrals for $H(\mathrm{div})$-conforming finite elements, and 
the cell integral for $L^2$-conforming finite elements,
and the corresponding nodal interpolators are defined in the usual way.
\end{remark}
\begin{remark}
An extensive analysis on commuting diagrams can be found in Bossavit \cite{Bossavit98} for nodal interpolation operators,
Cl\'em\'ent-type { quasi-interpolation} operators in Sch\"oberl \cite{Schoberl08}, 
and projection-based interpolatants in {Demkowicz $\&$ Buffa} \cite{DemkowiczBuffa05}. 
\end{remark}

\section{Lowest-order finite elements on Tetrahedral-Hexahedral-Pyramidal ({\sf THP}) partitions}
\subsection{Composite finite elements on non-affine pyramids}
The finite elements and basis functions constructed above can be used on conforming pyramidal meshes 
consisting of {\it regular pyramids} with a parallelogram base. 
However, in practice the hexahedral elements in the mesh will have faces that are quadrilaterals. Therefore, we now turn our attention to 
%
the case of {\it physical pyramids} with a bilinear 
B\'ezier patch base (which can be attached to a trilinear-mapped hexahedron). 
We recall the composite quadratic mapping defined in \cite{AinsworthDavydovSchumaker16}.
A pyramid is uniquely determined by its five vertices $\{\vt_i:1\le i\le5\}$
labeled as in Figure \ref{fig-pyramid}. The quadrilateral base  is
a non-degenerate bilinear B\'ezier patch in $\bR^3$ given by 
\[
\{
\lambda_1 \lambda_2\vt_1
+\lambda_1 (1-\lambda_2)\vt_2
+(1-\lambda_1)(1- \lambda_2)\vt_3
+(1-\lambda_1) \lambda_2\vt_4:(\lambda_1,\lambda_2)\in [0,1]^2
 \}.\]
The pyramid has four straight-sided triangular faces and a (possibly) non-planer quadrilateral face.
Moreover, the pyramid is regular if and only if $\vt_P = \b 0$ where
 $\vt_P := \vt_{1}-\vt_{2}+\vt_3-\vt_4$.

Let $\widehat{K}$ denote the reference pyramid with vertices at
\begin{align}
\label{pyr}
 \hat{\vt}_1 = (0,0,0), 
\hat{\vt}_2 = (1,0,0), 
\hat{\vt}_3 = (1,1,0), 
\hat{\vt}_4 = (0,1,0), 
\hat{\vt}_5 = (1/2,1/2,1),
\end{align}
and let $\widehat{T}_1$ and $\widehat{T}_2$ 
be the two tetrahedra obtained by splitting $\widehat{K}$
as shown in Figure \ref{fig-pyramid}.

The mapping from $\widehat{K}$ to the physical pyramid $K$ is given as follows:
\begin{align}
\label{mapping-pyr}
 \Phi_K(\widehat{\b x}) = 
  \left\{ \begin{tabular}{l l}
            $\lambda_1^{\widehat{T}_1} \vt_3+\lambda_2^{\widehat{T}_1} \vt_1+
            \lambda_3^{\widehat{T}_1} \vt_2+\lambda_4^{\widehat{T}_1} \vt_5-
            \lambda_1^{\widehat{T}_1} \lambda_2^{\widehat{T}_1} \vt_P$ & if $\widehat{\b x}\in \widehat{T}_1$\vspace{0.2cm}\\
            $\lambda_1^{\widehat{T}_2} \vt_1+\lambda_2^{\widehat{T}_2} \vt_3+
            \lambda_3^{\widehat{T}_2} \vt_4+\lambda_4^{\widehat{T}_2} \vt_5-
            \lambda_1^{\widehat{T}_2} \lambda_2^{\widehat{T}_2} \vt_P$ & if $\widehat{\b x}\in\widehat{T}_2$
            \end{tabular}\right.,
\end{align}
where $\b\lambda^{\widehat{T}_\ell}=\{\lambda_1^{\widehat{T}_\ell},\cdots, \lambda_4^{\widehat{T}_\ell}\}$ are 
the barycentric coordinates of $\widehat{\b x} = (\hat x_1, \hat x_2,\hat x_3)$ on the reference tetrahedral $\widehat{T}_\ell$, i.e.,
\begin{alignat*}{5}
&\lambda_1^{\widehat{T}_1} = \; \hat x_2-\frac{\hat x_3}{2}, 
&&\lambda_2^{\widehat{T}_1} =\; 1-\hat x_1-\frac{\hat x_3}{2}, \;\;\;
&\lambda_3^{\widehat{T}_1} =\; \hat x_1-\hat x_2, \;\;
&&\lambda_4^{\widehat{T}_1} =\;\hat x_3, \\
&\lambda_1^{\widehat{T}_2} =\; 1-\hat x_2-\frac{\hat x_3}{2}, \;\;\;
&&\lambda_2^{\widehat{T}_2} =\; \hat x_1-\frac{\hat x_3}{2}, \;\;
&\lambda_3^{\widehat{T}_2} =\; \hat x_2-\hat x_1, \;\;
&&\lambda_4^{\widehat{T}_2} =\;\hat x_3.
\end{alignat*}


The finite elements on the physical pyramid $K$ are defined 
in terms of those on the reference pyramid $\widehat K$ in the usual way \cite{Monk03}; 
see \eqref{space-mapping-1}--\eqref{space-mapping-4} in Section 1.
\subsection{The {\sf THP} partition and mesh refinement}
A {\sf THP} partition
is a conforming mesh consists of affine tetrahedra, pyramids with bilinear B\'ezier patch bases, and trilinear-mapped hexahedra
\cite[Definition 2.3]{AinsworthDavydovSchumaker16}. 

More generally, 
we consider a series of {\sf THP} partitions of a polyhedral domain obtained via uniform refinements of a given initial 
 {\sf THP} partition where the individual elements are refined using the rules indicated in Figure \ref{fig:tet}--\ref{fig:pyr}.

\begin{figure}[!ht]
\centerline{
\psfig{file=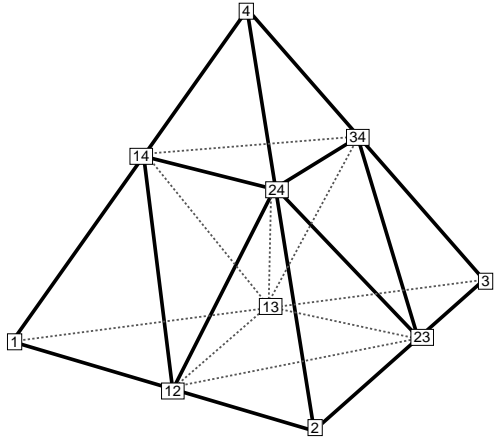,width=3.2in}
}
\caption{
The original tetrahedral with vertices $(1,2,3,4)$ is divided into
8 tetrahedra with vertices
$(1,12,13,14)$,  
$(12,2,23,24)$, $(13,23,3,34)$,
$(14,24,34,4)$, $(14,12,13,24)$, $(13,12,23,24)$, $(13,23,34,24)$, 
$(13,34,14,24)$, where 
the node at vertex $ij$ is placed at the centroid of 
nodes at  vertices $i$ and $j$.
}
\label{fig:tet}
\end{figure}


\begin{figure}[!ht]
\centerline{
\psfig{file=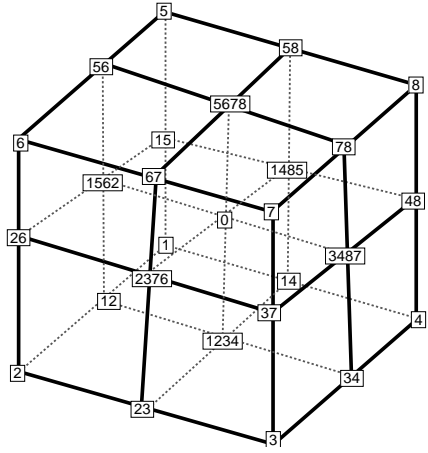,width=3.2in}
}
\caption{
 The original hexahedron with vertices $(1,2,3,4,5,6,7,8)$ is divided into
8 hexahedra with vertices
$(15,1562,0,1485,5,56,5678,58)$, $(1485,0,3487,48,58,5678,78,8)$,
$(1562,26,2376,0,56,6,67,5678)$, $(0,2376,37,3487,5678,67,7,78)$,
$(1,12,1234,14,15,1562,0,1485)$, $(14,1234,34,4,1485,0,3487,48)$,
$(12,2,23,1234,1562,26,2376,0)$, $(1234,23,3,34, 0,2376,37,3487)$,
where 
the new node at vertex $ijk\ell$ is placed at the centroid of nodes with 
 vertices $i$, $j$, $k$, and $\ell$,
and the new node at vertex $0$ is placed at the centroid of the nodes for all eight vertices of the original hexahedron.
}
\label{fig:hex}
\end{figure}

\begin{figure}[!ht]
\centerline{
\psfig{file=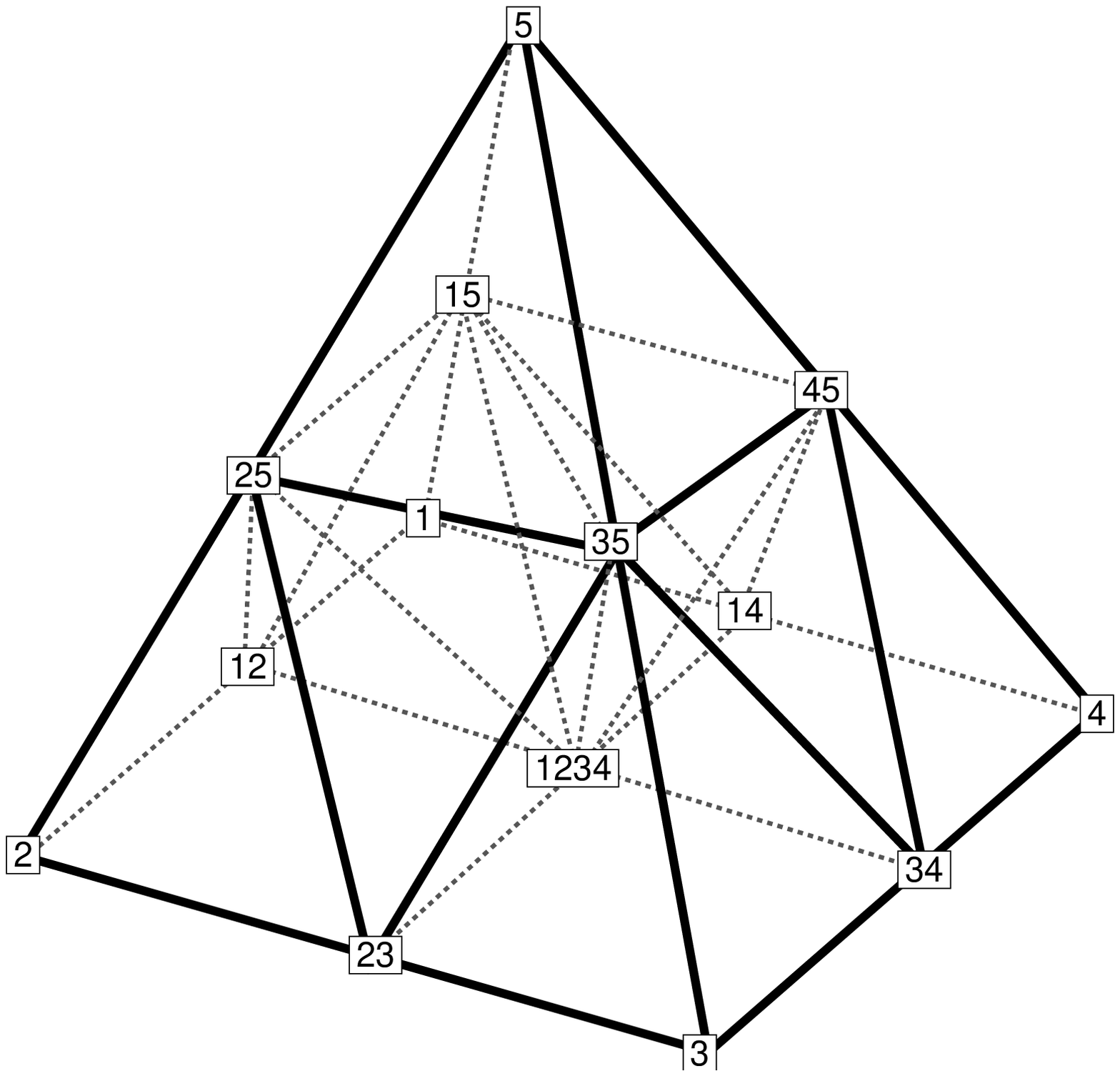,width=3.2in}
}
\caption{
The original pyramid with vertices $(1,2,3,4,5)$ is divided into
4 pyramids and 8 tetrahedra with vertices
$(1,12,1234,14,15)$, $(12,2,23,1234,25)$, 
$(1234,23,3,34,35)$, $(14,1234,34,4,45)$,
$ (12,15,25,1234)$, $(23,35,25,1234)$, $(34,45,35,1234)$, $(14,45,15,1234)$,
$(15,25,35, 5)$, $(15,35,45,5)$, $(15,35,25,1234)$, $(15,45,35,1234)$.
}
\label{fig:pyr}
\end{figure}

It is clear that a uniform refinement of a {\sf THP} partition is again a {\sf THP} partition.
\begin{remark}
There are three choices of uniform tetrahedral refinements. 
Instead of choosing the diagonal edge by connecting the nodes $13$ and $24$ as shown in 
 Figure \ref{fig:tet}, we could connect the nodes 
 $14$ and ${23}$, or the nodes $12$ and $34$.
 We shall follow \cite[Page 1138]{Ong94} to choose a particular diagonal, which guarantees 
the non-degeneracy of the tetrahedra in the series of {\sf THP} partitions.
\end{remark}

\subsection{The global finite elements}
With the finite elements on pyramids in Section 3, we are ready to construct a global 
lowest-order finite element exact sequence on a {\sf THP} partition $\mathcal{T}_h$ combining with the 
finite elements on tetrahedra and hexahedra listed in 
\eqref{space-mapping-1}--\eqref{space-mapping-4} (and Table \ref{table-H1}--\ref{table-L2}) in Section 1:
\begin{alignat*}{4}
  S_{h,1} =&
 \{ v\in H^1(\Omega):
&&\;\;\; v\restrict{K} \in S(K),\;\;\;\forall\, K\in\mathcal{T}_h
\}\\
  \b E_{h,0} =&
 \{\b v\in H(\mathrm{curl},\Omega):
&& \;\;\;\b v\restrict{K} \in \b E(K),\;\;\;\forall\, K\in\mathcal{T}_h
\}\\
  \b V_{h,0} =&
 \{\b v\in H(\mathrm{div},\Omega):
&&\;\;\; \b v\restrict{K} \in \b V(K),\;\;\;\forall\, K\in\mathcal{T}_h
\}\\
  W_{h,0} =&
 \{v\in L^2(\Omega):
&&\;\;\; v\restrict{K} \in W(K),\;\;\;\forall\, K\in\mathcal{T}_h
\}.
\end{alignat*}

We define the global interpolation operators $\varPi_S, \b\varPi_E,\b\varPi_V,$ and $\varPi_W$, restricted to each element
to be the image of the corresponding interpolant defined on the reference element under the mapping used to define the 
finite elements on the physical element in terms of the finite elements on the reference element.

The global finite elements give an exact sequence on the {\sf THP} partition $\mathcal{T}_h$ and satisfy the 
commuting diagram property.

\begin{theorem}
\label{thm-exact-g}
{\em\sf{(a)}}
The sequence 
 \[
 \bR
\overset{id}{\longrightarrow}
S_{h,1}
\overset{\grads}{\longrightarrow} 
\b E_{h,0}
\overset{\curls}{\longrightarrow} 
\b V_{h,0}
\overset{\divs}{\longrightarrow} 
W_{h,0}
\overset{}{\longrightarrow} 0,
 \]
is exact.\newline
{\em\sf{(b)}}
The following diagram {\em commutes}:
\begin{alignat*}{2}
\begin{tabular}{l c c c c c l l}
$H^r({\Omega})$&
$\overset{\grads}{\longrightarrow} $&
$H^{r-1}(\mathrm{curl},\Omega)\!\!$&
$\overset{\curls}{\longrightarrow} $&
$H^{r-1}(\mathrm{div},\Omega)\!\!$&
$\overset{\divs}{\longrightarrow} $&
$H^{r-1}(\Omega)\;$\\
$\;\;\;\;\downarrow {\scriptstyle \varPi_S}$ & &
$\downarrow{\scriptstyle\b \varPi_E}$
& &$\downarrow{\scriptstyle\b \varPi_V}$
& &
$\;\;\downarrow{\scriptstyle \varPi_W}$
\\
$S_{h,1}\!\!$&
$\overset{\grads}{\longrightarrow}$ &
$\b E_{h,0}\!\!$&
$\overset{\curls}{\longrightarrow}$ &
$\b V_{h,0}\!\!$&
$\overset{\divs}{\longrightarrow}$ &
$W_{h,0}$,
\end{tabular}
\end{alignat*}
where $r>3/2$.
\end{theorem}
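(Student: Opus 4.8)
The plan is to reduce both statements to their element-level counterparts, which are already in hand---Theorem~\ref{thm-commute} for the pyramid, and the classical results of \cite{Monk03} for the affine tetrahedron and the trilinearly mapped hexahedron---the bridge being the \emph{interface compatibility} of the degrees of freedom that guided the constructions in Section~3: if a vertex, edge or face is shared by two elements $K_1,K_2\in\mathcal{T}_h$ of any of the three admissible types, then the functionals in $\Sigma_{K_1}$ and $\Sigma_{K_2}$ attached to that subentity are literally the same functionals, and the polynomial traces they control match in type on the common face. Two consequences follow at once. First, $S_{h,1}$, $\b E_{h,0}$, $\b V_{h,0}$, $W_{h,0}$ are genuine subspaces of $H^1(\Omega)$, $H(\mathrm{curl},\Omega)$, $H(\mathrm{div},\Omega)$, $L^2(\Omega)$; here one uses, for the hexahedral and pyramidal elements, that the covariant and contravariant Piola transforms \eqref{space-mapping-2}--\eqref{space-mapping-4} and the composite map \eqref{mapping-pyr} preserve tangential, respectively normal, continuity, so that conformity on a physical face is equivalent to conformity on the corresponding reference face. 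Second, the element-by-element interpolants $\varPi_S,\b\varPi_E,\b\varPi_V,\varPi_W$ are single valued, since the subentity degrees of freedom that fix the trace of an interpolant on a shared face return the same value whichever side they are evaluated from.

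Granting this, part {\sf(b)} is immediate: restrict each of the three commuting identities to an arbitrary $K\in\mathcal{T}_h$ and invoke the local commuting property---Theorem~\ref{thm-commute}{\sf(b)} transported to the physical pyramid through \eqref{mapping-pyr} (the Piola pullbacks commute with $\grads,\curls,\divs$ up to the appropriate Jacobian factors, so the reference-configuration diagram is carried to the physical one), together with the classical fact on tetrahedra and hexahedra. Since the global operators agree with the local ones on each $K$, the global diagram commutes.

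For part {\sf(a)}, exactness at the left end is unconditional: $\grads v=0$ with $v\in S_{h,1}\subset H^1(\Omega)$ on a connected $\Omega$ forces $v$ constant, so $\ker\grads$ is exactly the constants. For the three remaining assertions I would glue local potentials. Take, say, $\b v\in\b E_{h,0}$ with $\curls\b v=0$; by Theorem~\ref{thm-commute}{\sf(a)} and the tetrahedral/hexahedral sequences there is on each $K$ a $\phi_K\in S(K)$ with $\grads\phi_K=\b v\restrict{K}$, and across a shared face the matching of the tangential components of $\b v$ makes $\phi_{K_1}-\phi_{K_2}$ constant there; fixing the free additive constants along a spanning tree of the element-adjacency graph---the loop obstruction vanishing for a domain $\Omega$ with trivial first cohomology---assembles the $\phi_K$ into a single $\phi\in S_{h,1}$ with $\grads\phi=\b v$. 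The same template (local potential from Theorem~\ref{thm-commute}{\sf(a)}; reconcile the shared-face traces using an element of the kernel of the relevant operator---surface gradients for the $\curls$ step, constants/normal fluxes for the $\divs$ step) yields exactness at $\b V_{h,0}$ and surjectivity of $\divs$ onto $W_{h,0}$. Equivalently, one may chase the commuting diagram of part {\sf(b)} against the exact de Rham complex on $\Omega$, using that each of $\varPi_S,\b\varPi_E,\b\varPi_V,\varPi_W$ restricts to the identity on its discrete space; that route is shorter but needs a remark that the piecewise-polynomial potentials, although of limited global Sobolev regularity on a pyramid, are smooth enough for the (pointwise/integral) degrees of freedom, so I would keep the gluing argument for self-containedness.

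I expect the crux to be precisely this potential-gluing step in part {\sf(a)}: checking that the shared-face reconciliation can always be completed, i.e. that the local-to-global obstruction is nothing more than the (co)homology of $\Omega$. This is where interface compatibility of the degrees of freedom is genuinely used, and where, for a topologically nontrivial $\Omega$, the statement must be read as ``the discrete complex has the same cohomology as $\Omega$''. A secondary, purely bookkeeping point is the behaviour of conformity under the non-affine maps \eqref{space-mapping-2}--\eqref{space-mapping-4} and \eqref{mapping-pyr}, to be verified face by face but presenting no real difficulty.
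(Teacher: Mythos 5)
The paper offers no proof of Theorem~\ref{thm-exact-g}: it is stated without argument, evidently regarded as a direct consequence of the element-level results (Theorem~\ref{thm-commute} for the pyramid, the classical tetrahedral and hexahedral theory of \cite{Monk03}) together with the conformity and single-valuedness of the degrees of freedom built into $S_{h,1},\b E_{h,0},\b V_{h,0},W_{h,0}$. So there is no authors' proof to compare yours against; what you propose is the natural way to fill the gap. Part (b) of your argument is sound as written: the global interpolants are assembled elementwise, the shared-subentity functionals are literally the same on both sides of an interface, and the local commuting identities, transported to the physical elements through the maps \eqref{space-mapping-1}--\eqref{space-mapping-4} and \eqref{mapping-pyr} (which intertwine $\grads$, $\curls$, $\divs$ with the correct Jacobian factors), sum to the global identity for $r>3/2$.

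For part (a), two remarks. First, you are right that exactness of the global sequence is as much a statement about $\Omega$ as about the spaces: for a domain with nontrivial first or second de Rham cohomology the sequence cannot be exact, and the theorem must be read with $\Omega$ topologically trivial -- a hypothesis the paper leaves implicit. Second, your gluing template is fully convincing only at the gradient stage, where the discrepancies $\phi_{K_1}-\phi_{K_2}$ are constants on shared faces and a spanning-tree argument closes the loops; at the $\curls$ stage the local vector potentials differ by gradients, so reconciling them requires a second-level gluing of their scalar potentials, which you do not spell out. The diagram chase you set aside handles this stage more cleanly and without the regularity worry you raise: given $\b w\in\b V_{h,0}$ with $\divs\b w=0$, choose a continuous-level potential $\b u\in H^1(\Omega)^3$ with $\curls\b u=\b w$ (available on a topologically trivial Lipschitz domain); on each element $\b u$ has $H^1$ regularity and polynomial curl, so $\b\varPi_E\b u$ is well defined, lies in $\b E_{h,0}$ because the edge functionals are single-valued, and $\curls\b\varPi_E\b u=\b\varPi_V\curls\b u=\b\varPi_V\b w=\b w$ by commutativity and the projection property. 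The same device gives surjectivity of $\divs$ onto $W_{h,0}$, while exactness at $\b E_{h,0}$ follows from your scalar gluing or, equivalently, by restricting a continuous potential $\phi\in H^1(\Omega)$ of a curl-free $\b v\in\b E_{h,0}$ to each element and invoking the local exactness of Theorem~\ref{thm-commute}(a) (the restriction differs from a local discrete potential by a constant, hence lies in $S(K)$).
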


\subsection{Approximation properties}
Optimal approximation properties of the above
global finite elements on {\it affine} {\sf THP} partitions 
follow directly by summing the local estimates provided in
part (c) of Theorem \ref{thm-commute} (and Remark 1).
Here we prove optimal approximation properties, following \cite{RannacherTurek92, IngramWheelerYotov10}, 
of the global finite elements on 
a series of {\sf THP} partitions obtained via uniform refinements of an initial coarse {\sf THP} partition. 

The finite elements on a physical element $K$ (tetrahedron, hexahedron, or pyramid)
are defined via proper mappings from the reference element $\widehat K$ (reference tetrahedron \eqref{tet}, reference hexahedron \eqref{cube}, or reference pyramid \eqref{pyr}). 
The {\it affine} tetrahedral mapping is
\begin{subequations}
\begin{align}
\label{mapping-tet}
 \Phi_K (\widehat x) = &\; \sum_{i=1}^4 \vt_i\phi_i,
\end{align}
where $\{\vt_i\}_{i=1}^4$ are the vertices of the physical tetrahedron, and
$\{\phi_i\}_{i=1}^4$ are the four linear nodal bases on the reference tetrahedron given in Table \ref{table-H1};
the {\it trilinear} hexahedral mapping is
\begin{align}
\label{mapping-hex}
 \Phi_K (\widehat x) = &\; \sum_{i=1}^8 \vt_i\phi_i,
\end{align}
\end{subequations}
where $\{\vt_i\}_{i=1}^8$ are the vertices of the physical hexahedron, and
$\{\phi_i\}_{i=1}^8$ are the eight trilinear nodal bases  on the reference hexahedron given in Table \ref{table-H1};
and the (composite) pyramidal mapping is given in \eqref{mapping-pyr}.

We have the following bounds on the derivatives of the mappings on a series of uniformly refined {\sf THP} partitions.
\begin{lemma}
  \label{lemma-mapping-1}
 Let $\{\mathcal{T}_h\}_{h\downarrow 0}$ be a series of uniformly refined {\sf THP} partitions of a polyhedral domain.
 Then, 
\[
\max_{i,j\in\{1,2,3\}}\left  |\frac{\partial^2\, \Phi_K}{\partial \hat x_i\partial \hat x_j}\right|_{\infty, \widehat K}
\preceq h_K^2
\]
for all element $K\in \mathcal{T}_h$.
\end{lemma}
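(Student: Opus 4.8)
The plan is to argue case by case on the type of $K$, since the structure of the mapping $\Phi_K$ is quite different for affine tetrahedra, trilinear hexahedra, and composite-quadratic pyramids. The common set-up: each fine element $K\in\mathcal{T}_h$ descends from a unique element $K_0$ of the fixed initial {\sf THP} partition after $n$ successive uniform refinements; because tetrahedra refine only into tetrahedra, hexahedra only into hexahedra, and pyramids only into pyramids and tetrahedra, the type of $K$ is inherited from $K_0$ (a tetrahedron possibly descending from a coarse pyramid); the refined family is shape regular, with $h_K\simeq 2^{-n}h_{K_0}\simeq 2^{-n}$ (this rests on the choice of tetrahedral diagonal following \cite{Ong94} and the mesh construction of \cite{AinsworthDavydovSchumaker16}); and since there are only finitely many coarse elements, any bound expressed in terms of a single $K_0$ holds uniformly. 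If $K$ is a tetrahedron the claim is immediate: by \eqref{mapping-tet} the map $\Phi_K$ is affine, hence all second derivatives vanish.

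For a hexahedron $K$ the key step is to observe that the centroid-based refinement rule of Figure~\ref{fig:hex} coincides, in physical space, with the subdivision of the reference cube $\widehat H$ into $8^n$ sub-cubes of side $2^{-n}$. Indeed a trilinear map is affine along each edge of $\widehat H$ (only one reference variable varies), hence carries edge midpoints to edge midpoints; it is bilinear on each face, so the bilinear interpolant identifies the image of the face centre $(1/2,1/2)$ with the average of the four vertex images; and it carries the body centre $(1/2,1/2,1/2)$ to the average of the eight vertex images. Hence the ``centroid'' nodes created by the refinement are exactly $\Phi_{K_0}$ evaluated at the new grid points of the reference subdivision, and, by induction on the refinement level, $\Phi_K=\Phi_{K_0}\circ A$ with $A\colon\widehat H\to\widehat H'$ the affine bijection onto the relevant sub-cube $\widehat H'$, whose Jacobian is $2^{-n}I$. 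The chain rule then yields $\partial^2\Phi_K/\partial\hat x_i\partial\hat x_j=4^{-n}\,(\partial^2\Phi_{K_0}/\partial\hat x_i\partial\hat x_j)\circ A$; since each coarse $\Phi_{K_0}$ is a fixed trilinear map, $|\partial^2\Phi_{K_0}/\partial\hat x_i\partial\hat x_j|_{\infty,\widehat H}\le C_0$ uniformly, and therefore $|\partial^2\Phi_K/\partial\hat x_i\partial\hat x_j|_{\infty,\widehat K}\le C_0\,4^{-n}\preceq h_K^2$.

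For a pyramid $K$ I would start from the explicit composite map \eqref{mapping-pyr}: on each reference sub-tetrahedron $\widehat T_\ell$ it is affine in $\widehat{\b x}$ apart from the single quadratic term $-\lambda_1^{\widehat T_\ell}\lambda_2^{\widehat T_\ell}\,\vt_P$, where $\vt_P=\vt_1-\vt_2+\vt_3-\vt_4$ and the barycentric coordinates $\lambda_i^{\widehat T_\ell}$ are fixed affine functions of $\widehat{\b x}$. Hence each $\partial^2\Phi_K/\partial\hat x_i\partial\hat x_j$ is, on each $\widehat T_\ell$, a fixed scalar multiple of $\vt_P$ (the Hessian is only piecewise defined across $\widehat T_1\cap\widehat T_2$, which is harmless for an $L^\infty$ bound), so $|\partial^2\Phi_K/\partial\hat x_i\partial\hat x_j|_{\infty,\widehat K}\preceq|\vt_P(K)|$, and it remains to prove $|\vt_P(K)|\preceq h_K^2$. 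Now $\vt_P$ is exactly the bilinear cross-term coefficient of the base B\'ezier patch, and a direct computation from the four child base vertex sets of Figure~\ref{fig:pyr} --- namely $\big(\vt_1,\tfrac{\vt_1+\vt_2}{2},\tfrac{\vt_1+\vt_2+\vt_3+\vt_4}{4},\tfrac{\vt_1+\vt_4}{2}\big)$ and its three analogues --- gives $\vt_P(\mathrm{child})=\tfrac14\,\vt_P(\mathrm{parent})$ for each of the four child pyramids (equivalently, the child base patch is the parent patch reparametrised by an affine map of $[0,1]^2$ with Jacobian $\tfrac12 I$). Iterating over the $n$ levels down from the coarse pyramid $K_0$ then gives $|\vt_P(K)|=4^{-n}|\vt_P(K_0)|\preceq 4^{-n}\simeq h_K^2$.

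I expect the pyramid case to be the main obstacle: one has to read off the dependence of $D^2\Phi_K$ on $\vt_P$ from the piecewise-quadratic formula \eqref{mapping-pyr}, and verify the exact $\tfrac14$-contraction of $\vt_P$ under the refinement of Figure~\ref{fig:pyr}. The hexahedral case hinges on the single observation that the physical centroid refinement agrees with the reference subdivision --- after that it is routine --- and the tetrahedral case is trivial. The only external input used throughout is the shape regularity $h_K\simeq 2^{-n}h_{K_0}$ of the uniformly refined {\sf THP} family, which is part of the construction.
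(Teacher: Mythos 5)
Your proposal is correct, and its tetrahedral and pyramidal cases follow essentially the same route as the paper: the affine case is dismissed trivially, and for the pyramid you bound the piecewise-constant Hessian of \eqref{mapping-pyr} by $|\vt_P|$ and verify the exact quartering $\vt_P(K_c)=\tfrac14\vt_P(K_m)$ for each child base $\bigl(\vt_1,\tfrac{\vt_1+\vt_2}{2},\tfrac{\vt_1+\vt_2+\vt_3+\vt_4}{4},\tfrac{\vt_1+\vt_4}{2}\bigr)$, which is precisely the computation in the paper. Where you genuinely diverge is the hexahedral case. The paper works directly with the refinement rule of Figure \ref{fig:hex}: it writes the cross derivative as $\vt_P^{bot}(1-\hat x_3)+\vt_P^{top}\hat x_3$ and computes the recursion $\vt_P^{bot}(K_c)=\tfrac18\bigl(\vt_P^{bot}(K_m)+\vt_P^{top}(K_m)\bigr)$, $\vt_P^{top}(K_c)=\tfrac14\vt_P^{top}(K_m)$, so that the maximum contracts by $\tfrac14$ per level; this is self-contained algebra on the child vertex coordinates and needs no structural observation about the maps. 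You instead observe that a trilinear map sends reference edge midpoints, face centres and the body centre to the corresponding physical averages, so the centroid refinement reproduces $\Phi_{K_0}$ evaluated on the dyadic reference grid, whence $\Phi_K=\Phi_{K_0}\circ A$ with $DA=2^{-n}I$ (up to a harmless coordinate permutation/reflection coming from the child vertex ordering, a point worth stating explicitly) and the whole Hessian scales exactly by $4^{-n}$ by the chain rule. Your nesting argument is arguably cleaner and more robust --- it gives the exact scaling of all second derivatives at once, extends to higher derivatives, and makes transparent why refined hexahedra are $h^2$-parallelepipeds in the sense of \cite{IngramWheelerYotov10} --- at the price of verifying the interpolation-compatibility of the centroid rule, whereas the paper's recursion avoids that verification; both rely equally on $h_K\simeq 2^{-n}h_{K_0}$ from the uniform refinement and on the finiteness of the coarse partition.
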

\begin{proof}
We consider each type of elements in turn.

When the element $K$ is a tetrahedron, the mapping $\Phi_K$ is affine, 
and the inequality holds trivially.

When the element $K$ is a pyramid obtained by mapping a reference pyramid using 
the piecewise quadratic mapping \eqref{mapping-pyr}, 
the second derivative of the mapping $\Phi_K$ is a constant in each of 
the tetrahedra of which it is composed, and moreover, on each such tetrahedra, we have
\begin{align*}
 \left|\frac{\partial^2\, \Phi_K}{\partial \hat x_i\partial \hat x_j}\right|\le |\vt_P|,\quad\quad
 \text{ for all } i,j\in\{1,2,3\}.
\end{align*}
Here $\vt_P =\vt_{1}-\vt_{2}+\vt_3-\vt_4$, and $\{\vt_i\}_{i=1}^4$ are the four vertices for the quadrilateral base
of the pyramid as shown in Figure \ref{fig-pyramid}.
Hence, in order to prove the bound in Lemma \ref{lemma-mapping-1}, it suffices to show that $|\vt_P|\preceq h_K^2$.

We consider two consecutive {\sf THP} partitions, $\mathcal{T}_h$ and $\mathcal{T}_{h/2}$, where 
$\mathcal{T}_{h/2}$ is obtained from $\mathcal{T}_h$ via a uniform refinement.

Let $K_{m}\in \mathcal{T}_h$ be a pyramid with vertices $(1,2,3,4,5)$, and 
$K_{c}\in \mathcal{T}_{h/2}$ be one of its four child pramids.
Without loss of generality, we take the vertices of $K_{c}$ to be 
$
 (1, 12,1234,14,5)$
as shown in Figure \ref{fig:pyr}.
We have 
\begin{align*}
  \vt_P(K_c) = &\;
  \vt_{1} - \vt_{12} + 
 \vt_{1234}-\vt_{14}\\
 = &\; \vt_{1}-\frac{1}{2}(\vt_{1}+\vt_{2})  +\frac{1}{4}(\vt_{1}+\vt_{2}+\vt_3+\vt_4)
 - \frac12(\vt_{1}+\vt_{4})\\
 = &\;\frac{1}{4}(\vt_{1}-\vt_{2}+\vt_3-\vt_4)\;= \;\frac{1}{4}\vt_P(K_m).
\end{align*}
This means when a pyramid $K_m$ is subdivided, the size of each child pyramid is $1/2$ that of the mother whereas
the quantity $\vt_P(K_c)$ is $(1/2)^2$ of the corresponding quantity $\vt_P(K_m)$. It follows by induction that 
$|\vt_P|\preceq h_K^2$ as required.

When the element $K$ is a hexahedron obtained from the trilinear mapping \eqref{mapping-hex}, 
each of the second cross derivatives $\frac{\partial^2\, \Phi_K}{\partial \hat x_i\partial \hat x_j}$, $i\not=j$,  of the mapping $\Phi_K$ is a linear function, 
and the second derivatives $\frac{\partial^2\, \Phi_K}{\partial \hat x_i\partial \hat x_i}$ vanish identically.
By symmetry, we only need to prove the bound for 
$\frac{\partial^2\, \Phi_K}{\partial \hat x_1\partial \hat x_2}$. 
Now, 
\begin{align*}
 \frac{\partial^2\, \Phi_K}{\partial \hat x_1\partial \hat x_2}
 = (\vt_1-\vt_2+\vt_3-\vt_4)(1-\widehat x_3)
 +(\vt_5-\vt_6+\vt_7-\vt_8)\widehat x_3,
\end{align*}
and so, 
denoting $\vt_P^{bot}(K) := \vt_1-\vt_2+\vt_3-\vt_4$ 
and $\vt_P^{top}(K) := \vt_5-\vt_6+\vt_7-\vt_8$, we have 
\[
\left| \frac{\partial^2\, \Phi_K}{\partial \hat x_1\partial \hat x_2}
\right|\le \max\left\{|\vt_P^{bot}|, |\vt_P^{top}|\right\}.
\]

As before, we consider two consecutive {\sf THP} partitions, $\mathcal{T}_h$ and $\mathcal{T}_{h/2}$, and 
show that the right hand side of the above inequality decreases (at least) by 
$(1/2)^2$ as the mesh size decrease by $1/2$. The result then follows by induction.

Let $K_{m}\in \mathcal{T}_h$ be a hexahedron with vertices $(1,2,3,4,5,6,7,8)$, and 
$K_{c}\in \mathcal{T}_{h/2}$ be one of its eight child hexahedra.
Without loss of generality, we take the vertices of $K_{c}$ to be 
$
 (15, 1562,0,1485,5, 56,5678,58)$
as shown in Figure \ref{fig:hex}.
We have 
\begin{align*}
  \vt_P^{bot}(K_c) = &\;
  \vt_{15} - \vt_{1562} + 
 \vt_{0}-\vt_{1485}\\
 = &\;\frac12(\vt_{1}+\vt_5)-
 \frac{1}{4}(\vt_{1}+\vt_{5}+\vt_6+\vt_2)  +
 \frac{1}{8}\sum_{i=1}^8\vt_{i}
 - \frac14(\vt_{1}+\vt_{4}+\vt_5+\vt_8)\\
 = &\;\frac{1}{8}(\vt_{1}-\vt_{2}+\vt_3-\vt_4
 +\vt_{5}-\vt_{6}+\vt_7-\vt_8)\\
 = &\;\frac{1}{8}\left(\vt_P^{bot}(K_m)+\vt_P^{top}(K_m)\right),\\
  \vt_P^{top}(K_c) = &\;
  \vt_{5} - \vt_{56} + 
 \vt_{5678}-\vt_{58}\\
 = &\;\vt_5-
 \frac{1}{2}(\vt_{5}+\vt_6)  +
 \frac14(\vt_{5}+\vt_{6}+\vt_7+\vt_8) - \frac12(\vt_{5}+\vt_8)\\
 = &\;\frac{1}{4}\vt_P^{top}(K_m).
\end{align*}
This implies that
\[
 \max\left\{|\vt_P^{bot}(K_c)|, |\vt_P^{top}(K_c)|\right\}\le 
  \frac14\max\left\{|\vt_P^{bot}(K_m)|, |\vt_P^{top}(K_m)|\right\}.
\]
Hence, we have $|\vt_P|\preceq h_K^2$ as desired.
\end{proof}

The following estimates for the Jacobian matrix and determinant of the mapping from the reference element to elements
in a family of {\sf THP} partitions will be useful:
\begin{lemma}
 \label{lemma-mapping-2}
 Let $\{\mathcal{T}_h\}_{h\downarrow 0}$ be a series of uniformly refined {\sf THP} partitions.
 Then, 
 \begin{subequations}
 \label{eq}
 \begin{align}
 \label{eq-1}
h_K\preceq|F_K|_{0,\infty, \widehat K} \preceq h_K,\quad \quad
h_K^3 \preceq|J_K|_{0,\infty, \widehat K} \preceq h_K^3,
 \end{align}
and 
\begin{align}
 \label{eq-2}
|F_K|_{1,\infty, \widehat K} \preceq h_K^2,\quad
|J_KF_K^{-1}|_{1,\infty, \widehat K} \preceq h_K^{3},
\quad
|J_K|_{1,\infty, \widehat K} \preceq h_K^{4},
\end{align}
 \end{subequations}
holds for all elements $K\in \mathcal{T}_h$.
\end{lemma}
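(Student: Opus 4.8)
The plan is to treat the three element types (affine tetrahedron, trilinear hexahedron, composite-quadratic pyramid) separately, since in each case $\Phi_K$ is an explicit low-degree polynomial in the reference coordinates whose coefficients are the physical vertices $\vt_i$, and the vertices of an element $K\in\mathcal{T}_h$ all lie within a ball of radius $\preceq h_K$. The lower and upper bounds in \eqref{eq-1} are essentially the statement that the elements are shape-regular (non-degenerate) under uniform refinement. For the tetrahedron this is standard for a shape-regular affine family; for the hexahedron and pyramid it follows from the non-degeneracy built into the refinement rules (Figures \ref{fig:hex}--\ref{fig:pyr}) together with Lemma \ref{lemma-mapping-1}, which controls exactly the ``twist'' terms $\vt_P$, $\vt_P^{bot}$, $\vt_P^{top}$ that could otherwise destroy non-degeneracy. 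The key quantitative ingredient is therefore Lemma \ref{lemma-mapping-1}: the cross-derivative defect is $\preceq h_K^2$, one order better than the naive $\preceq h_K$.

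First I would dispose of \eqref{eq-1}. The upper bound $|F_K|_{0,\infty,\widehat K}\preceq h_K$ is immediate: $F_K$ is a polynomial (constant, or multilinear) whose coefficients are differences of vertices, each of size $\preceq h_K$. For the lower bound $h_K\preceq|F_K|_{0,\infty,\widehat K}$, note that $F_K$ maps some reference edge vector to a physical edge of $K$, and a physical edge of $K$ has length $\succeq h_K$ by shape-regularity of the refined partition (for tetrahedra this is the usual regularity assumption on the initial mesh, preserved under the chosen refinement; for hexahedra and pyramids it follows because the child-element edges are either halves of parent edges or segments joining parent-face centroids, and one checks inductively these stay bounded below in the scaled geometry). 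The determinant bounds $h_K^3\preceq |J_K|\preceq h_K^3$ then follow from $J_K=\det F_K$ together with the lower bound on $|J_K|$ coming again from non-degeneracy (the image element has volume $\succeq h_K^3$, and $J_K$ cannot change sign on $\widehat K$ since $\Phi_K$ is a bijection — for the pyramid one uses that $\Phi_K$ restricted to each reference tetrahedron is affine with nonvanishing Jacobian, a fact already used in Lemma \ref{lemma-mapping-1}).

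Next I would prove \eqref{eq-2}. For the tetrahedron $F_K$ is constant so all three seminorms vanish and the bounds are trivial. For the hexahedron and pyramid, $|F_K|_{1,\infty,\widehat K}$ is controlled by the second derivatives of $\Phi_K$, which is exactly the content of Lemma \ref{lemma-mapping-1}: $|F_K|_{1,\infty,\widehat K}=\max_{i,j}|\partial^2\Phi_K/\partial\hat x_i\partial\hat x_j|_{\infty,\widehat K}\preceq h_K^2$. For $|J_K|_{1,\infty,\widehat K}$ I would differentiate $J_K=\det F_K$ by the product/cofactor rule: each entry of $\partial_k F_K$ is $\preceq h_K^2$ by the previous bound, the cofactors are products of two entries of $F_K$ each $\preceq h_K$, so $\partial_k J_K$ is a sum of terms of the form (entry of $\partial_k F_K$)$\times$(product of two entries of $F_K$) $\preceq h_K^2\cdot h_K^2=h_K^4$. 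Finally, for $|J_KF_K^{-1}|_{1,\infty,\widehat K}$ I would use that $J_KF_K^{-1}=\mathrm{adj}(F_K)$, the adjugate matrix, whose entries are $2\times2$ minors of $F_K$; differentiating such a minor gives a sum of (entry of $\partial_k F_K$)$\times$(entry of $F_K$)$\preceq h_K^2\cdot h_K=h_K^3$. This avoids differentiating $F_K^{-1}$ directly, which is the cleanest route.

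The main obstacle I anticipate is not any single computation but the bookkeeping required to verify the \emph{lower} bounds (shape-regularity) uniformly across the refinement hierarchy — i.e. that the constants in $h_K\preceq|F_K|_{0,\infty,\widehat K}$ and $h_K^3\preceq|J_K|_{0,\infty,\widehat K}$ do not degrade as $h\downarrow 0$. For tetrahedra this needs the specific diagonal choice referenced from \cite{Ong94}; for pyramids and hexahedra it requires checking that the finitely many child shapes produced by one refinement step, rescaled to unit diameter, form a compact family of non-degenerate elements, so that iterating stays uniformly non-degenerate — the quantitative decay $\vt_P(K_c)=\tfrac14\vt_P(K_m)$ etc. from Lemma \ref{lemma-mapping-1} is what makes this induction close. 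The upper bounds, by contrast, are routine polynomial estimates once one writes $\Phi_K$ explicitly.
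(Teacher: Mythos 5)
Your plan is sound, and it takes a genuinely more self-contained route than the paper. The paper's proof is essentially a citation: it notes that Lemma \ref{lemma-mapping-1} makes every hexahedron in the refined family an $h^2$-parallelepiped in the sense of Ingram--Wheeler--Yotov, invokes their Lemma 3.1 for the hexahedral case, and declares the pyramidal case ``similar and omitted'' and the tetrahedral case trivial. You instead argue directly: the upper bounds in \eqref{eq-1} from the fact that the entries of $F_K$ are (multilinear combinations of) vertex differences of size $\preceq h_K$; the bound $|F_K|_{1,\infty,\widehat K}\preceq h_K^2$ as a restatement of Lemma \ref{lemma-mapping-1}; and the bounds $|J_K|_{1,\infty,\widehat K}\preceq h_K^4$ and $|J_KF_K^{-1}|_{1,\infty,\widehat K}\preceq h_K^3$ by differentiating $\det F_K$ through the cofactor expansion and writing $J_KF_K^{-1}=\mathrm{adj}(F_K)$, so that each derivative term is one entry of $\nabla F_K$ ($\preceq h_K^2$) times two, respectively one, entries of $F_K$ ($\preceq h_K$ each) --- the exponents come out exactly right, and avoiding a direct differentiation of $F_K^{-1}$ is indeed the clean way to do it. The only place your plan is thinner than a complete write-up is the lower bounds (uniform non-degeneracy as $h\downarrow 0$): the perturbation/compactness induction you sketch, closed by the quadratic decay $\vt_P(K_c)=\tfrac14\vt_P(K_m)$ and its hexahedral analogue, is precisely the content of the cited $h^2$-parallelepiped lemma, and for tetrahedra the specific diagonal choice of Ong is what keeps the family non-degenerate, as you say. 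In short, your route buys an explicit, reference-free argument that also treats the pyramidal case (which the paper leaves to the reader), at the cost of the bookkeeping the paper avoids by delegating to the literature.
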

\begin{proof}
Lemma \ref{lemma-mapping-1} means
that each hexahedral element in the series of {\sf THP} partitions is automatically
an {\it $h^2$-parallelepiped} in the sense defined in 
\cite[Chapter 3]{IngramWheelerYotov10}. 
Hence, the proof of Lemma \ref{lemma-mapping-2} 
for the hexahedral case can be found in \cite[Lemma 3.1]{IngramWheelerYotov10}. 
The proof for the pyramidal case is similar and omitted, and that for the tetrahedral case is trivial.
\end{proof}

The main approximation results of the finite elements are summarized below.
\begin{theorem}
 \label{thm:approx}
  Let $\{\mathcal{T}_h\}_{h\downarrow 0}$ be a series of uniformly refined {\sf THP} partitions of a polyhedral domain $\Omega$.
  Then, the nodal interpolation operators have the following approximation properties:
\begin{itemize}
 \item [(1)]
If $v\in H^2(\Omega)$,
then
\begin{alignat*}{3}
 \|v-\varPi_S v\|_0\preceq &\; h^2\|v\|_2,\;\;
  \|\grads(v-\varPi_S v)\|_0\preceq &\; h\|v\|_2,
\end{alignat*}
 \item [(2)]
If $\b v\in H^1(\mathrm{curl},\Omega)$, then 
\begin{alignat*}{3}
 \|\b v-\b\varPi_E \b v\|_0\preceq &\; h
 (\|\b v\|_1+\|\curls \b v\|_1), \;\; 
 \|\curls(\b v-\b\varPi_E \b v)\|_0\preceq &\; h
 \|\curls \b v\|_1.
\end{alignat*}
 \item [(3)]
If $\b v\in H^1(\mathrm{div},\Omega)$, then
\begin{alignat*}{3}
 \|\b v-\b\varPi_V \b v\|_0
\preceq &\; h
\|\b v\|_1, \;\;
 \|\divs(\b v-\b\varPi_V \b v)\|_0
\preceq &\; h
\|\divs\b v\|_1.
\end{alignat*}
 \item [(4)]
If $v\in H^1(\Omega)$, then
\begin{alignat*}{3}
 \|v-\varPi_W v\|_0
\preceq &\; h\|v\|_1.
\end{alignat*}
\end{itemize}
Here $h$ is the mesh size of the {\sf THP} partition $\mathcal{T}_h$, and 
all norms are evaluated on the domain $\Omega$.
\end{theorem}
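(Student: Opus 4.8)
The plan is to reduce the global estimate on a \textsf{THP} partition to an element-by-element estimate on the reference element, transported back to the physical element via the appropriate Piola-type mapping, and then use the mapping bounds in Lemma~\ref{lemma-mapping-1} and Lemma~\ref{lemma-mapping-2}. The starting point is that $\varPi_S$, $\b\varPi_E$, $\b\varPi_V$, $\varPi_W$ commute with the reference-to-physical mappings \eqref{space-mapping-1}--\eqref{space-mapping-4} (this is how the global operators were defined), so on each $K\in\mathcal{T}_h$ the interpolation error on $K$ is the image under the relevant mapping of the interpolation error of $\widehat v = v\circ\Phi_K$ (or its covariant/contravariant pullback) on $\widehat K$. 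For the tetrahedral and hexahedral pieces the estimates are classical (the hexahedra are $h^2$-parallelepipeds by Lemma~\ref{lemma-mapping-1}, so \cite{IngramWheelerYotov10} applies directly, as noted in Lemma~\ref{lemma-mapping-2}); the genuinely new work is the pyramidal piece.

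First I would establish, on the reference pyramid $\widehat K$, the local interpolation estimates for the composite operators. For $\widehat v \in H^2(\widehat K)$ one needs $\|\widehat v - \widehat\varPi_S\widehat v\|_{H^1(\widehat K)}\preceq \|\widehat v\|_{H^2(\widehat K)}$, and similarly in the $H(\mathrm{curl})$, $H(\mathrm{div})$, $L^2$ cases with the spaces $H^1(\mathrm{curl},\widehat K)$, $H^1(\mathrm{div},\widehat K)$, $H^1(\widehat K)$. These follow from the trace theorem (to make sense of the edge/face/vertex functionals defining $\Sigma_{\widehat K}$, exactly as in part (c) of Theorem~\ref{thm-commute}) together with the Bramble--Hilbert lemma, using the polynomial inclusions $\pol_1(\widehat K)\subset S(\widehat K)$ etc.\ recorded in the proof of Theorem~\ref{thm-commute}. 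This part is essentially a repetition of the single-pyramid argument already given; the difference is only that here $\widehat K$ is a fixed reference pyramid, so no $h$-scaling enters yet.

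Next I would transport these to a physical pyramid $K\in\mathcal{T}_h$. Write $v|_K = \widehat v\circ\Phi_K^{-1}$ (and correspondingly $F_K^{-T}$-, $J_K^{-1}F_K$-, $J_K^{-1}$-pullbacks for the vector and $L^2$ cases). Using the chain rule one bounds $\|v-\varPi_S v\|_{L^2(K)}$ and $\|\grads(v-\varPi_S v)\|_{L^2(K)}$ by $\|\widehat v - \widehat\varPi_S\widehat v\|_{H^1(\widehat K)}$ times powers of $|J_K|_{0,\infty}$, $|F_K|_{0,\infty}$, $|F_K^{-1}|_{0,\infty}$, and then bounds $\|\widehat v\|_{H^2(\widehat K)}$ back in terms of $\|v\|_{H^2(K)}$ using the same geometric quantities plus $|F_K|_{1,\infty}$ and $|J_K|_{1,\infty}$ (the second derivatives of $\Phi_K$ enter the change of variables in the $H^2$ seminorm, which is precisely why Lemma~\ref{lemma-mapping-1} is needed). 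Collecting the powers of $h_K$ from \eqref{eq-1}--\eqref{eq-2} gives the claimed $h_K^2$, $h_K$ (for $H^1$), $h_K$, $h_K$ (for $H(\mathrm{curl})$, $H(\mathrm{div})$, $L^2$) local rates; the vector cases additionally use that the Piola pullbacks intertwine $\grads$, $\curls$, $\divs$, so the curl/divergence seminorms transform with one fewer derivative. Finally, I would square and sum over $K\in\mathcal{T}_h$, using $h_K\le h$, to obtain the global estimates.

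The main obstacle is the non-affine, and in particular only \emph{piecewise}-quadratic, nature of the pyramidal mapping $\Phi_K$: one must keep careful track of the second derivatives of $\Phi_K$ in the $H^2\to H^2$ change of variables and verify they contribute exactly the power of $h_K$ furnished by the bound $|\vt_P|\preceq h_K^2$ of Lemma~\ref{lemma-mapping-1}, rather than the $h_K$ one would naively fear from a generic bilinear base; a secondary technical point is that $\Phi_K$ is only Lipschitz (not $C^1$) across the interface between the two sub-tetrahedra, so the change-of-variables estimates must be carried out on $\widehat T_1$ and $\widehat T_2$ separately and reassembled, which is legitimate because the composite spaces and interpolants are defined piecewise on $T_1,T_2$ anyway. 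Once these geometric bookkeeping points are handled, the rest is the routine scaling-and-summing argument.
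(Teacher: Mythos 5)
Your proposal is correct and follows essentially the same route as the paper's proof: an element-by-element pullback to the reference element, the fixed-size reference estimates of Theorem~\ref{thm-commute}(c) (with Remark~1) obtained via Bramble--Hilbert, chain-rule transport using the Jacobian bounds of Lemmas~\ref{lemma-mapping-1}--\ref{lemma-mapping-2} (where the $|\vt_P|\preceq h_K^2$ bound enters exactly as you anticipate), and summation over $K\in\mathcal{T}_h$. The only minor difference is that the paper derives the $\grads$, $\curls$, $\divs$ error bounds from the commuting identities $\grads\varPi_S=\b\varPi_E\grads$, $\curls\b\varPi_E=\b\varPi_V\curls$, $\divs\b\varPi_V=\varPi_W\divs$ together with the already-proved $L^2$ estimates for the next space in the sequence, whereas you rescale the derivative part of the reference error directly --- which also yields the stated rates, provided (as you indicate via Bramble--Hilbert and the intertwining of the Piola maps) the reference error is bounded by seminorms of the transformed derivative rather than full graph norms.
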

\begin{proof}
We shall prove the estimates on a single element $K$, and sum to obtain 
the corresponding estimates on the composite mesh.

Let $\Phi_K$ be the mapping from a reference element $\widehat K$ to a physical element $K$, and 
$F_K$ and $J_K$ be the corresponding Jacobian matrix and determinant, respectively.

 Denote $\widehat {\varPi}_S$,
  $\widehat {\b \varPi}_E$,  $\widehat {\b \varPi}_V$,  and 
  $\widehat {\varPi}_W$, respectively,  be the nodal interpolators for the 
  $H^1$, $H(\mathrm{curl})$, $H(\mathrm{div})$, and $L^2$ finite elements on the reference element, respectively.

We first prove the estimates in the $L^2$-norm for each of the spaces $S_{h,1}$, $\b E_{h,0}$, $\b V_{h,0}$, and 
$W_{h,0}$.

The proof for $S_{h,1}$ follows line-by-line from \cite[Lemma 1]{RannacherTurek92}.
 For a given function $v\in H^2(K)$ on $K$, we associate the function 
 $\widehat v\equiv v\circ \Phi_K \in H^2(\widehat K)$.
Thanks to Theorem 6(c) part (1) and Remark 1, there holds
\[
 \|\widehat v-\widehat \varPi_S\widehat v\|_{\widehat K}\preceq 
  |\widehat v|_{2, \widehat K},
\]
and hence, observing the relation $\varPi_S v(\b x) = \widehat {\varPi}_S \widehat v(\widehat{\b x})$ and using the bound 
for the determinant $J_K$ in \eqref{eq-1}, it follows that 
\[
\|v- \varPi_S v\|_{K}\preceq h_K^{3/2} 
\|v- \varPi_S v\|_{\widehat K}\preceq h_K^{3/2} |\widehat v|_{2, \widehat K}. 
\]
To estimate the term on the right hand side, we note that 
\[
 |\widehat{\nabla}^2\widehat v| = 
  |\widehat{\nabla}(F_K^T \nabla v)|
  \preceq 
  |F_K|_{1,\infty} |\nabla v|+|F_K|^2_{0,\infty} |\nabla^2 v|
  \preceq h_K^2(|\nabla v|+|\nabla^2 v|),
\]
where the last inequality follows from the estimates in Lemma \ref{lemma-mapping-2}.
This yields
\[
 \|v- \varPi_S v\|_{K}\preceq h_K^2\|v\|_{2,K}.
\]

The proof for $\b E_{h,0}$ follows using a similar argument to the tetrahedral case
given in \cite[Theorem 5.41]{Monk03}.
For a given $\b v\in H^1(\mathrm{curl};K)$ on $K$, we associate the function 
 $\widehat{\b v}\equiv F_K^{T}\b v\circ \Phi_K \in H^1(\mathrm{curl};\widehat K)$.
By Theorem 6(c) part (2) and Remark 1, we have
\[
 \|\widehat {\b v}-\widehat {\b \varPi}_E\widehat{\b  v}\|_{\widehat K}\preceq 
  |\widehat{\b v}|_{1, \widehat K}+|\widehat{\nabla}\times\widehat{\b v}|_{1, \widehat K}.
\]
 Hence, using the relation $\b\varPi_E \b v(\b x) = F_K^{-T}\widehat {\b \varPi}_E \widehat{\b v}(\widehat{\b x})$ and using the bound 
for the Jacobian $F_K$ and determinant $J_K$ in \eqref{eq-1}, it follows that 
\[
\|\b v- \b\varPi_E \b v\|_{K}\preceq h_K^{1/2} 
\|\widehat{\b v}- \widehat{\b \varPi}_E\widehat{\b v}\|_{\widehat K}\preceq h_K^{1/2} 
(|\widehat{\b v}|_{1, \widehat K}+|\widehat{\nabla}\times\widehat{\b v}|_{1, \widehat K}). 
\]
The term on the right hand side can be estimated by noting that  
\[
 |\widehat{\nabla}\widehat{\b v}|   \preceq 
  |F_K|_{1,\infty} |\b v|+|F_K|^2_{0,\infty} |\nabla\b v|
  \preceq h_K^2(|\b v|+|\nabla\b v|),
\]
and then
\begin{align*}
  |\widehat{\nabla}(\widehat{\nabla}\times\widehat{\b v})|   \preceq&\; 
  |J_KF_K^{-1}|_{1,\infty} |\nabla\times\b v|+|J_KF_K^{-1}|_{0,\infty}|F_K|_{0,\infty} |\grads(\nabla\times\b v)|\\
  \preceq &\;h_K^3(|\nabla\times \b v|+|\nabla(\nabla\times\b v)|),
\end{align*}
thanks to the relation $\widehat{\nabla}\times\widehat{\b v} = J_KF_K^{-1}\nabla\times \b v$.
Hence,
\[
\|\b v- \b\varPi_E \b v\|_{K}\preceq
h_K(\|\b v\|_{1,K}+h_K \|\nabla \times \b v\|_{1,K}).
\]

Let us now prove the $L^2$-estimate for $\b V_{h,0}$.
For a given function $\b v\in H^1(\mathrm{div};K)$ on $K$, we associate the function 
 $\widehat{\b v}\equiv J_KF_K^{-1}\b v\circ \Phi_K \in H^1(\mathrm{curl};\widehat K)$. 
By Theorem 6(c) part (3) and Remark 1, we have
\[
 \|\widehat {\b v}-\widehat {\b \varPi}_V\widehat{\b  v}\|_{\widehat K}\preceq 
  |\widehat{\b v}|_{1, \widehat K}.
\]
 Hence, using the relation $\b\varPi_V \b v(\b x) = J_K^{-1}F_K\widehat {\b \varPi}_V \widehat{\b v}(\widehat{\b x})$ and using the bound 
for the Jacobian $F_K$ and determinant $J_K$ in \eqref{eq-1}, it follows that 
\[
\|\b v- \b\varPi_V \b v\|_{K}\preceq h_K^{-1/2} 
\|\widehat{\b v}- \widehat{\b \varPi}_V\widehat{\b v}\|_{\widehat K}\preceq h_K^{-1/2} 
|\widehat{\b v}|_{1, \widehat K}. 
\]
An estimate of the term on the right hand side follows in the same way as in the $H(\mathrm{curl})$ case:
\begin{align*}
  |\widehat{\nabla}\widehat{\b v}|   \preceq&\; 
  |J_KF_K^{-1}|_{1,\infty} |\b v|+|J_KF_K^{-1}|_{0,\infty}|F_K|_{0,\infty} |\grads \b v|
  \preceq \;h_K^3(| \b v|+|\nabla\b v|),
\end{align*}
so that
\[
\|\b v- \b\varPi_V \b v\|_{K}\preceq
h_K\|\b v\|_{1,K}.
\]

Finally, we  prove the $L^2$-estimate for $W_{h,0}$.
For a given function $ v\in H^1(K)$ on $K$, we associate the function 
 $\widehat{\b v}\equiv J_K v\circ \Phi_K \in H^1(\widehat K)$. 
By Theorem 6(c) part (4) and Remark 1, we have
\[
 \|\widehat {v}-\widehat { \varPi}_W\widehat{  v}\|_{\widehat K}\preceq 
  |\widehat{ v}|_{1, \widehat K}.
\]
 Hence, observing the relation $\varPi_W  v(\b x) = J_K^{-1}\widehat { \varPi}_W \widehat{ v}(\widehat{\b x})$ and using the bound 
for the determinant $J_K$ in \eqref{eq-1}, it follows that 
\[
\|v- \varPi_W v\|_{K}\preceq h_K^{-3/2} 
\|\widehat{v}- \widehat{ \varPi}_W\widehat{ v}\|_{\widehat K}\preceq h_K^{-3/2} 
|\widehat{ v}|_{1, \widehat K}. 
\]
To estimate the term on the right hand side, we note that 
\begin{align*}
  |\widehat{\nabla}\widehat{ v}|   \preceq&\; 
  |J_K|_{1,\infty} | v|+|J_K|_{0,\infty}|F_K|_{0,\infty} |\grads  v|
  \preceq \;h_K^4(|  v|+|\nabla v|),
\end{align*}
and hence
\[
\|v- \varPi_W v\|_{K}\preceq
h_K\|v\|_{1,K}.
\]

It remains to prove the estimates (1)--(4) for the derivatives.
In fact, these estimates are immediate consequence of 
the commuting diagram property. In particular, 
observing $\grads \varPi_S = \varPi_E \grads$, there holds
\begin{align*}
 \|
 \grads (v-\varPi_S v)
 \|_K = 
\|  \grads v-\b \varPi_E \grads v
 \|_K
 \preceq &\;h_K \|\grads v\|_{1,K},
\end{align*}
observing $\curls \b\varPi_E =\b \varPi_V \curls$, there holds
\begin{align*}
 \|
 \curls (\b v-\b\varPi_E\b v)
 \|_K = 
\|  \curls\b v-\b \varPi_V (\curls\b v)
 \|_K
 \preceq &\;h_K \|\curls \b v\|_{1,K},
\end{align*}
while
observing $\divs \b\varPi_V =\varPi_W \divs$, there holds
\begin{align*}
 \|
 \divs (\b v-\b\varPi_V\b v)
 \|_K = 
\|  \divs\b v-\varPi_W (\divs\b v)
 \|_K
 \preceq &\;h_K \|\divs \b v\|_{1,K}.
\end{align*}
\end{proof}

\section{Conclusion}
In this paper, we introduced a set of lowest-order composite finite elements for the de Rham complex on  pyramids.
The finite elements are compatible 
with those for the lowest-order {Raviart-Thomas-\NED}sequence on tetrahedra and hexahedra,
and, as such, can be used to construct conforming 
finite element spaces on {\sf THP} partitions consisting of tetrahedra, pyramids, and hexahedra.
Moreover, the finite element spaces deliver optimal error estimates on a sequence of successively refined {\sf THP}
partitions.


\bibliographystyle{siam}

\end{document}